\documentclass[runningheads]{llncs}

\usepackage{graphicx}
\usepackage{algorithm}
\usepackage{algorithmicx}
\usepackage{algpseudocode}
\usepackage{comment}
\usepackage{enumitem}
\usepackage{todonotes}
\algrenewcommand\algorithmicrequire{\textbf{Input:}}
\algrenewcommand\algorithmicensure{\textbf{Output:}}

\usepackage{amsthm}

\begin{document}
\title{On the Structure of Hamiltonian Graphs with Small Independence Number\thanks{Supported by GAUK 370122.} }
\titlerunning{On the Structure of Hamiltonian Graphs with Small Independence Number}
%
\author{Nikola Jedli\v{c}kov\'{a}\inst{1}\orcidID{0000-0001-9518-6386} \and
Jan Kratochvíl\inst{1}\orcidID{0000-0002-2620-6133}}
\authorrunning{N. Jedličková, J. Kratochvíl}
%
\institute{Department of Applied Mathematics, Faculty of Mathematics and Physics, Charles University, Prague, Czech Republic \\ \email{jedlickova@kam.mff.cuni.cz}, \email{honza@kam.mff.cuni.cz}}

\maketitle              

\begin{abstract}
A Hamiltonian path (cycle) in a graph is a path (cycle, respectively) which passes through all of  its vertices. The problems of deciding the existence of a Hamiltonian cycle (path) in an input graph are well known to be NP-complete, and restricted classes of graphs which allow for their polynomial-time solutions are intensively investigated.  Until very recently the complexity was open even for graphs of independence number at most 3. So far unpublished result of Jedli\v{c}kov\'{a} and Kratochv\'{\i}l [arXiv:2309.09228] shows that for every integer $k$, Hamiltonian path and cycle are polynomial-time solvable in graphs of independence number bounded by $k$.
As a companion structural result, we determine explicit obstacles for the existence of a Hamiltonian path  for small values of $k$, namely for graphs of independence number 2, 3, and 4. Identifying these obstacles in an input graph yields alternative polynomial-time algorithms for Hamiltonian path and cycle with no large hidden multiplicative constants.

\keywords{Graph \and Hamiltonian path \and Hamiltonian cycle \and Path Cover \and Independence number \and Polynomial-time algorithm.}
\end{abstract}

\section{Introduction}\label{sec:Intro}

A cycle in a graph is {\em Hamiltonian} if it contains all vertices of the graph. A graph is  called {\em Hamiltonian} if it contains a Hamiltonian cycle. The notion of Hamiltonian graphs is well-known and intensively studied in graph theory. Many sufficient conditions for Hamiltonicity of graphs are known (e.g., minimum degree at least $|V(G)|/2$ which goes back to Dirac~\cite{dirac1952some}, the theorem of Ore~\cite{ore1960note} whose short proof from \cite{bondy2003short} is one of the jewels of graph theory, Chv\'atal's conditions on the degree sequence~\cite{chvatal1972hamilton}), but a simple necessary and sufficient condition is not known (and not likely to exists, as the problem itself is NP-complete). 
On the other hand, many open questions and conjectures are around. The more than 50 years old conjecture of Barnette states that every cubic planar bipartite 3-connected graph is Hamiltonian, to mention at least one. 

Hamiltonian cycles in solution spaces of configurations of certain types are called {\em Gray codes}. A prominent example is that the $2^n$ bitvectors of length $n$ (i.e., the vertices of the $n$-dimensional hypercube) can be arranged in a cyclic order so that the Hamming distance (i.e., the number of coordinates in which they differ) of any two consecutive ones equals $1$. In this connection we have to mention the recent proof of the famous Middle-levels conjecture, stating that the middle level subgraph of the hypercube of odd dimension is Hamiltonian~\cite{Mutze2016}. M\"utze also proved another almost 50 years old conjecture on Hamiltonicity of odd Kneser graphs in~\cite{Mutze2018}.

A {\em Hamiltonian path} in a graph is a path that contains all vertices of the graph. Obviously, every Hamiltonian graph contains a Hamiltonian path, and a graph is Hamiltonian if it contains a Hamiltonian path connecting a pair of adjacent vertices. A graph is called {\em Hamiltonian connected} if every two distinct vertices are connected by a Hamiltonian path. Chv\'atal and Erd\H{o}s stated and proved elegant sufficient conditions for Hamiltonian connectedness and for the existence of Hamiltonian paths in terms of comparing the vertex connectivity and independence number of the graph under consideration, cf. Proposition~\ref{prop:chvatal}.
For an excellent survey on Hamiltonian graphs cf.~\cite{gould2003advances} and a more recent one~\cite{gould2014recent}.

From the computational complexity point of view, all these problems are hard. Karp~\cite{Karp1972} proved already in 1972 that deciding the existence of Hamiltonian paths and cycles in an input graph are NP-complete problems. In this sense the problem of deciding the existence of a Hamiltonian path that connects two given vertices is the canonical one - if this can be solved in polynomial time in a given graph (or graphs from a given graph class), then all the other problems (existence of a Hamiltonian path, existence of a Hamiltonian cycle, and Hamiltonian connectedness) can be solved in polynomial time by checking all pairs of (adjacent, in the case of Hamiltonian cycle) vertices of the input graph.    

In order to study the borderline between easy (polynomial-time solvable) and hard (NP-complete) variants of the problems, researchers have intensively studied the complexity of Hamiltonian-related problems in special graph classes. 

Deciding the existence of a Hamiltonian cycle remains NP-complete on planar graphs~\cite{garey1976planar}, circle graphs~\cite{damaschke1989hamiltonian} and for several other generalizations of interval graphs~\cite{bertossi1986hamiltonian}, for chordal bipartite graphs~\cite{mu96} and for split (and therefore also for chordal) graphs~\cite{golumbic2004algorithmic}, but is solvable in linear time in interval graphs~\cite{keil1985finding} and in convex bipartite graphs~\cite{mu96}.
The existence of a Hamiltonian path can be decided in polynomial time for cocomparability graphs~\cite{damaschke1991finding} and for circular arc graphs~\cite{damaschke1993paths}. 

Many of the above mentioned graph classes are hereditary, i.e., closed in the induced subgraph order, and as such can be described by collections of forbidden induced subgraphs. This has led to carefully examining $H$-free graphs, i.e., graph classes with a single forbidden induced subgraph. For many graphs $H$, the class of $H$-free graphs has nice structural properties (e.g., $P_4$-graphs are exactly the cographs) or their structural properties can be used to design polynomial algorithms for various graph-theoretical problems (as a recent example, cf. colouring of $P_6$-free graphs in~\cite{Spirkl2019}). For Hamiltonian-type problems, consider three-vertex graphs $H$. For $H=K_3$, both Hamiltonian cycle and Hamiltonian path are NP-complete on triangle-free graphs, since they are NP-complete on bipartite graphs~\cite{mu96}. For $H=3K_1$, the edgeless graph on three vertices, both Hamiltonian cycle and Hamiltonian path are polynomial time solvable~\cite{Duf1981}. The remaining two graphs, $P_3$ and $K_1+K_2$, are induced subgraphs of $P_4$, thus the corresponding class of $H$-free graphs is a subclass of cographs, and as such a subclass of cocomparability graphs, in which the Hamiltonian path and Hamiltonian cycle problems are solvable in polynomial time~\cite{damaschke1991finding}. However, a complete characterization of graphs $H$ for which Hamiltonian path or Hamiltonian cycle problems are solvable in polynomial time (and for which they are NP-complete) when restricted to $H$-free input graphs is not in sight. 

{\em Our results.}
Recently, there has been a substantial progress at least for the case when $H$ is an edge-less graph, i.e., $H=kK_1$ for some $k$. (Note the straightforward connection to the independence number of the input graphs -- a graph is $kK_1$-free if and only if its independence number does not exceed $k-1$.)  It is shown in~\cite{jedlickova2023hamiltonian} that Hamiltonian path and Hamiltonian cycle are polynomial-time solvable in graphs of independence number at most $k$, for every $k$, i.e., that these problems are in the class XP when parameterized by the independence number of the input graph. 
 The goal of this paper is to provide a companion structural description of feasible instances for small values of $k=2,3,4$. The connectivity level of the input graph plays a role in the case distinction of the characterization, and theorems of Chv\'atal and Erd\H{o}s are handy here (cf. Section~\ref{sec:Prelim}). For instance, it is relatively easy to describe the feasible instances of Hamiltonian path with prescribed end-vertices in a $3K_1$-free graph, say $G$ -- none of the prescribed end-vertices may be an articulation of the input graph, together they should not form a cut of size two, and if the graph is not vertex-2-connected, then for every articulation $x$, the prescribed end-vertices belong to different components of $G-x$ (cf.~Theorem~\ref{thm:3K1_uv}). All these conditions can be checked quickly, and thus these obstacles provide not only a structural result, but also a practical algorithm. For $4K_1$-free and $5K_1$-free graphs, the conditions become more complicated (cf. Theorems~\ref{thm:4k1},\ref{thm:5k1}).        

\section{Preliminaries} \label{sec:Prelim}

We consider simple undirected graphs without loops or multiple edges.  
The vertex set of a graph $G$ is denoted by $V(G)$, its edge set by $E(G)$. Edges are considered as two-element sets of vertices, thus we write $u\in e$ to express that a vertex $u$ is incident with an edge $e$.  For the sake of brevity, we write $uv$ instead of $\{u,v\}$ for the edge containing vertices $u$ and $v$. We say that $u$ is {\em adjacent to} $v$ if $uv\in E(G)$. The {\em degree} of a vertex is the number of other vertices adjacent to it. The subgraph of $G$ induced by vertices $A\subseteq V(G)$ will be denoted by $G[A]$. The {\em independence number}  of a graph $G$, denoted by $\alpha(G)$, is the order of the largest edgeless induced subgraph of $G$. With the standard notion of $K_k$ being the complete graph with $k$ vertices and $G+H$ being the disjoint union of graphs $G$ and $H$, $\alpha(G)$ is equal to the largest $k$ such that $kK_1$ is an induced subgraph of $G$. A graph is called {\em $H$-free} if it contains no induced subgraph isomorphic to $H$.  

A {\em path} in a graph $G$ is a sequence of distinct vertices such that any two consecutive ones are adjacent. A {\em cycle} is formed by a path connecting two adjacent vertices. The path (cycle) is {\em Hamiltonian} if it contains all vertices of the graph. The \emph{path cover number} of $G$, denoted by $\mathrm{pc}(G)$, is the smallest number of vertex disjoint paths that contain all vertices of $G$.

A graph is connected if any two vertices are connected by a path. Since the problems we are interested in are either trivially infeasible on disconnected graphs, or can be reduced to studying the components of connectivity one by one, we only consider connected input graphs in the sequel. 

A {\em vertex  cut} in a graph $G$ is a set $A\subset V(G)$ of vertices such that the graph $G-A=G[V(G)\setminus A]$ is disconnected. The {\em vertex connectivity} $c_v(G)$ of a graph $G$ is the order of a minimum cut in $G$, or $|V(G)|-1$ if $G$ is a complete graph. Since we will not consider edge connectivity, we will often omit the adjective when talking about the connectivity measure, we always have vertex connectivity in mind. 

Although we consider undirected graphs, when we talk about a path in a graph, the path itself is considered traversed in the direction from its starting vertex to the ending one. Formally, when we say that a path $P$ connects a vertex $x$ to a vertex $y$, then by $P^{-1}$ we denote the same path, but traversed from $y$ to $x$. This is important when creating a longer path by concatenating shorter ones.  

We will be using the following well-known corollary of a theorem of Menger.

\begin{proposition}\label{prop:menger}  \cite{denley2001generalization}
Let $G$ be an $s$-connected graph with $s \geq 1$. If $x \in V(G)$, $Y \subseteq V(G)$ and $x \not \in Y$, then there exist distinct vertices $y_1, \ldots, y_m \in Y$, where $m= \min \{s, |Y|\}$, and internally disjoint paths $P_1, \ldots, P_m$ such that for every $i \in 1,\ldots,m$,
\begin{itemize}
\item 
$P_i$ is a path starting in vertex $x$ and ending in vertex $y_i$, and 
\item 
$P_i \cap Y = \{y_i\}$.
\end{itemize}
\end{proposition}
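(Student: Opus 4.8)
The plan is to deduce this ``fan'' version of Menger's theorem from the classical vertex form stated in Proposition~\ref{prop:menger} by a standard gadget construction. First I would build an auxiliary graph $G'$ from $G$ by adding a single new vertex $w$ and joining it by an edge to every vertex of $Y$. The guiding intuition is that internally disjoint $x$--$w$ paths in $G'$ correspond, after deleting the final vertex $w$, to internally disjoint paths running from $x$ into $Y$, with the last edge of each path recording which vertex of $Y$ is hit.

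The crucial step is to verify that $G'$ admits $m=\min\{s,|Y|\}$ internally disjoint $x$--$w$ paths. By Menger's theorem it suffices to show that every $x$--$w$ vertex separator $S$ (with $x,w\notin S$, so $S\subseteq V(G)\setminus\{x\}$) satisfies $|S|\ge m$. I would split into two cases. If $Y\subseteq S$, then deleting $S$ removes every neighbour of $w$, so $S$ is indeed a separator and $|S|\ge|Y|\ge m$. Otherwise pick $y\in Y\setminus S$; since $y$ is adjacent to $w$ and $w\notin S$, the fact that $S$ separates $x$ from $w$ forces $S$ to separate $x$ from $y$ inside $G$ itself, because any $x$--$y$ path avoiding $S$ would extend through the edge $yw$ to an $x$--$w$ path avoiding $S$. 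In particular $x$ and $y$ must be non-adjacent, so $S$ is a genuine vertex cut of the $s$-connected graph $G$, whence $|S|\ge s\ge m$. This establishes the separator bound in all cases.

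Applying Menger's theorem then yields internally disjoint $x$--$w$ paths $Q_1,\dots,Q_m$ in $G'$. The final step is to convert these into paths meeting $Y$ only at their endpoints. For each $i$ I would let $y_i$ be the \emph{first} vertex of $Y$ encountered when traversing $Q_i$ from $x$, and set $P_i$ to be the initial segment of $Q_i$ from $x$ up to $y_i$. By the choice of $y_i$ we get $P_i\cap Y=\{y_i\}$ immediately. Because the $Q_i$ share no vertices other than $x$ and $w$, and each $P_i$ avoids $w$ while lying inside $Q_i$, the paths $P_1,\dots,P_m$ are internally disjoint; moreover each $y_i$ is an internal vertex of its $Q_i$ (it is neither $x$, which lies outside $Y$, nor $w$, which also lies outside $Y$), so the chosen $y_i$ are pairwise distinct.

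The main obstacle I anticipate is not the invocation of Menger's theorem but the bookkeeping in this last step: the raw paths $Q_i$ may wander through several vertices of $Y$ before reaching $w$, so the condition $P_i\cap Y=\{y_i\}$ is not automatic and is exactly what the ``first hit'' truncation is designed to enforce. The one subtlety worth double-checking is that truncating cannot create a collision among the endpoints or the internal vertices of the resulting paths, which follows cleanly from the internal disjointness of the $Q_i$ noted above.
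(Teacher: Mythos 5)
Your proof is correct, but there is nothing in the paper to compare it against: Proposition~\ref{prop:menger} is quoted as a known corollary of Menger's theorem and cited to \cite{denley2001generalization}; the paper itself gives no proof. Your argument is the standard derivation of this fan lemma. You add an apex vertex $w$ joined to all of $Y$, verify that every $x$--$w$ separator $S$ in the auxiliary graph has size at least $m=\min\{s,|Y|\}$ (your two cases, $Y\subseteq S$ and $y\in Y\setminus S$, are handled correctly, including the key observation that in the second case $x$ and $y$ are forced to be non-adjacent, so that $s$-connectivity of $G$ really does give $|S|\geq s$), then invoke the vertex form of Menger's theorem for the pair $x,w$, and finally truncate each resulting path at its first vertex of $Y$. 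The truncation is the step most often botched in blind write-ups, and you handle it properly: each first-hit vertex $y_i$ is an internal vertex of $Q_i$ (being neither $x$, as $x\notin Y$, nor $w$, as $w\notin V(G)$), so internal disjointness of the $Q_i$ forces the $y_i$ to be pairwise distinct and the truncated paths to be internally disjoint, while the first-hit choice gives $P_i\cap Y=\{y_i\}$. Two small points deserve explicit mention but are immediate: $x$ and $w$ are non-adjacent in the auxiliary graph (since $x\notin Y$), which is what licenses the separator form of Menger's theorem, and every $Q_i$ does meet $Y$ (the predecessor of $w$ on $Q_i$ lies in $Y$), so the first hit is well defined.
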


We build upon the following results of Chv\'atal and Erd\H{o}s.

\begin{proposition} \cite{chvatal1972note} \label{prop:chvatal} 
Let $G$ be an $s$-connected graph. 
\begin{enumerate}
\item If $\alpha(G)<s+2$, then $G$ has a Hamiltonian path. 
\item If $\alpha(G)<s+1$, then $G$ has a Hamiltonian cycle.
\item If $\alpha(G)<s$, then $G$ is Hamiltonian connected (i.e. every pair of vertices is joined by a Hamiltonian path).
\end{enumerate}
\end{proposition}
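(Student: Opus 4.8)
The plan is to take the Hamiltonian cycle statement~(2) as the core, prove it by an extremal (longest cycle) argument powered by Proposition~\ref{prop:menger}, and then reduce~(1) and~(3) to it. The common engine is a \emph{rotation/insertion} lemma: if a longest cycle (or longest $u$--$w$ path) misses a vertex $v$, then the fan of internally disjoint paths from $v$ guaranteed by Proposition~\ref{prop:menger} can be spliced into the cycle/path in a way that either produces a strictly longer one (contradicting maximality) or exhibits an independent set that is too large for the assumed bound on $\alpha(G)$.

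For~(2), assume $G$ is $s$-connected with $\alpha(G)<s+1$ (the case $\alpha(G)=1$, i.e.\ $G$ complete, being immediate), and suppose for contradiction that a longest cycle $C$ omits some vertex $v$. I orient $C$ and apply Proposition~\ref{prop:menger} with $x=v$ and $Y=V(C)$ to get $m=\min\{s,|C|\}$ internally disjoint paths $P_1,\dots,P_m$ from $v$ to distinct vertices $y_1,\dots,y_m$ of $C$, each meeting $C$ only in $y_i$. If $m<s$, then $m=|C|$ and every vertex of $C$ is reached; picking any edge $y_ay_b$ of $C$ and replacing it by the detour $y_a\,P_a^{-1}\,v\,P_b\,y_b$ yields a strictly longer cycle, a contradiction. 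Hence $m=s$. Writing $y_i^+$ for the successor of $y_i$ along $C$, I claim $\{v\}\cup\{y_1^+,\dots,y_s^+\}$ is an independent set of size $s+1$: if $v$ were adjacent to some $y_i^+$, then replacing the edge $y_iy_i^+$ by $y_i\,P_i^{-1}\,v\,y_i^+$ lengthens $C$; and if $y_i^+y_j^+\in E(G)$ for $i\neq j$, then reversing the arc of $C$ between $y_i^+$ and $y_j$ and splicing in $P_i$, $P_j$ and the edge $y_i^+y_j^+$ again produces a longer cycle. Both contradict maximality, so $\alpha(G)\ge s+1$, contradicting the hypothesis.

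Statement~(1) I would derive from~(2) by adding to $G$ one new vertex adjacent to all of $V(G)$: its connectivity becomes at least $s+1$, while its independence number is unchanged (the new vertex lies in no independent set of size $\ge 2$), so $\alpha<s+2$ lets me apply~(2) to the enlarged graph, and deleting the added vertex from the resulting Hamiltonian cycle leaves a Hamiltonian path of $G$. Statement~(3) is the path-analogue of~(2): fix $u\neq w$, take a longest $u$--$w$ path $P=p_0\cdots p_\ell$ with $p_0=u$, $p_\ell=w$, and apply Proposition~\ref{prop:menger} from a vertex $v\notin P$ to $Y=V(P)$, obtaining paths $Q_1,\dots,Q_m$. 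If $m<s$ all of $V(P)$ is reached, and using the paths $Q_0,Q_1$ to $p_0$ and $p_1$ the detour $p_0\,Q_0^{-1}\,v\,Q_1\,p_1p_2\cdots p_\ell$ is a longer $u$--$w$ path; otherwise the successors of the $s$ landing points together with $v$ are independent, and after discarding at most one vertex (the successor of $w$ is undefined) we still have an independent set of size at least $s$, contradicting $\alpha(G)<s$. Here the reroutings must keep $u$ and $w$ as the two endpoints, which is the only way the bookkeeping differs from~(2).

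The main obstacle is the rotation step, above all the case $y_i^+y_j^+\notin E(G)$: one must verify that the spliced object is a single longer cycle (respectively $u$--$w$ path) with no repeated vertices, which requires getting the orientation of the reversed arc and the traversal direction of each $P_i$ exactly right — precisely the situation the convention $P^{-1}$ was introduced for. By contrast, the two reductions and the final independent-set count are routine once~(2) is established.
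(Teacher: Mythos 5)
The paper never proves Proposition~\ref{prop:chvatal} at all --- it is imported verbatim, with a citation, from Chv\'atal and Erd\H{o}s \cite{chvatal1972note} and used as a black box --- so there is no internal proof to compare against. Your reconstruction is correct and is essentially the classical argument from that source. Part (2) is the standard longest-cycle proof: your two splicing moves are exactly the Chv\'atal--Erd\H{o}s rotations, and even in the degenerate situation where some successor $y_i^+$ coincides with another landing point $y_j$ (so that the ``edge between successors'' $y_i^+y_j^+$ is automatically present as an edge of $C$), the rotation still produces a strictly longer cycle, so the dichotomy ``longer cycle or independent set of size $s+1$'' survives. Part (1) via a universal vertex, which raises the connectivity to $s+1$ while leaving $\alpha$ unchanged, is the standard reduction; part (3), the endpoint-preserving path analogue with the successor of $w$ discarded, is also sound, and a direct argument of this kind is genuinely needed there since the universal-vertex trick does not apply. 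It is worth noting that the very technique you use is what the authors themselves adapt inside the paper, in Case~2 of the proof of Theorem~\ref{thm:3K1_uv} (``We will use a similar approach as in \cite{chvatal1972note}''), with $3K_1$-freeness playing the role of the independence-number count. The only caveat, which your write-up shares with the paper's own statement of the proposition, is the trivial degenerate case: for instance $K_2$ is $1$-connected with $\alpha=1<2$ yet has no Hamiltonian cycle, so the cycle and Hamiltonian-connectedness claims implicitly assume enough vertices.
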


\section{Hamiltonian paths in graphs with small independence number} \label{sec:smallcases}

In this section we explicitly describe the situations when Hamiltonian paths do not exist for $kK_1$-free graphs for small cases of $k=3,4,5$. The descriptions clearly yield polynomial time decision algorithms with no large hidden multiplicative constants in the running time estimate.    

For the description, we will use the following notation.
A path cover of size 2 with one path starting in a vertex $u$ and the other one starting in a vertex $v$ will be denoted by $\mathrm{PC}(u,v)$. A path $P$ with endpoints $u,v$ will be denoted by $P_{u,v}$. Furthermore, $c(G)$ will denote the number of connected components of $G$. Note that a vertex cut of size 1 is called an {\em articulation point}. The set of articulation points in a graph $G$ will be denoted by $Art(G)$. 

\subsection{$3K_1$-free graphs}\label{sec:3k1}

It follows from Proposition~\ref{prop:chvatal}.1 that each connected $3K_1$-free graph has a Hamiltonian path. We will establish when a $3K_1$-free graph has a Hamiltonian path, resp. path cover starting in specified vertex (resp. vertices).

\begin{theorem}\label{thm:3K1_uv}
Let $G$ be a connected $3K_1$-free graph and let $u,v$ be distinct vertices of $G$. Then $G$ has a Hamiltonian path $P_{u,v}$ if and only if
\begin{enumerate}[label=(\alph*)]
\item none of the vertices $u$ and $v$ is an articulation point in $G$, and 
\item for every articulation point $x$, the vertices $u$ and $v$ belong to different components of $G-\{x\}$, and 
\item $\{u,v\}$ is not a minimum vertex cut in $G$,
\end{enumerate}
which happens if and only if $G$ satisfies (a), (b) and\\[5pt]
{(c')} $\{u,v\}$ is not a vertex cut of size two in $G$.
\end{theorem}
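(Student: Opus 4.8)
The plan is to prove the biconditional $P_{u,v}\text{ exists}\iff\text{(a),(b),(c)}$ by establishing necessity, then sufficiency from the \emph{a priori} weaker-looking package (a),(b),(c'), and finally reconciling the two forms of the third condition. Necessity is the easy direction and yields both (c) and (c') at once. If $P_{u,v}$ exists, deleting $u$ (respectively $v$) leaves a Hamiltonian path of $G-u$ (respectively $G-v$), so neither is an articulation, which gives (a); deleting both leaves the spanning path on the interior vertices, so $\{u,v\}$ is not a vertex cut at all, which is strictly stronger than (c) and than (c'). For (b), an articulation $x$ is by (a) an interior vertex of $P_{u,v}$, and deleting it splits the path into two connected pieces $S_1\ni u$ and $S_2\ni v$ covering $V(G)\setminus\{x\}$; since $G-x$ is disconnected these pieces lie in different components (and, since $\alpha(G)\le 2$, they \emph{are} the two components), so $u$ and $v$ are separated.

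Next I would settle the relationship between (c) and (c'). The implication (c')$\Rightarrow$(c) is immediate, since a minimum cut equal to $\{u,v\}$ would in particular be a cut of size two. For the reverse under (a) and (b) I argue the contrapositive: if $\{u,v\}$ is a cut of size two I claim $G$ has no articulation, so that this cut is already minimum. Suppose $x$ were an articulation; by (a) $x\notin\{u,v\}$, and since $\alpha(G)\le 2$ the graph $G-x$ has exactly two components $C_1\ni u$ and $C_2\ni v$ (labelling by (b)). Let $D$ be a component of $G-\{u,v\}$ avoiding $x$. As $D$ is connected and the only link between $C_1$ and $C_2$ passes through $x\notin D$, the set $D$ lies entirely in one $C_i$; combined with $N(D)\setminus D\subseteq\{u,v\}$ this forces $N(D)\setminus D$ to consist of the single vertex of $\{u,v\}$ lying in that $C_i$, making it an articulation and contradicting (a).

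The core of the argument is sufficiency: assuming (a),(b),(c') I must build $P_{u,v}$, and here I would split on the connectivity $c_v(G)$. If $c_v(G)\ge 3$, then $\alpha(G)\le 2<c_v(G)$, so Proposition~\ref{prop:chvatal}(3) makes $G$ Hamiltonian connected and $P_{u,v}$ exists outright. The low-connectivity cases I would drive by a structural observation forced by $\alpha(G)\le 2$: if $S$ is a minimum cut (so $|S|=c_v(G)\le 2$), then $G-S$ has exactly two components and \emph{each is a clique}, because any two non-adjacent vertices in one component, together with any vertex of the other, would form an induced $3K_1$. Thus for $c_v(G)=1$ the graph is two cliques joined through a single articulation, and for $c_v(G)=2$ it is two cliques $M_1,M_2$ with no edges between them, together with two ``connector'' vertices $a,b$.

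In the articulation case ($c_v(G)=1$), conditions (a) and (b) place $u$ and $v$ in the two different cliques, and since a clique admits a Hamiltonian path between any two of its vertices I would simply traverse $u$'s clique ending at a neighbour of the articulation, cross it, and traverse $v$'s clique down to $v$; condition (a) guarantees that the required neighbour of the articulation exists (otherwise $u$ or $v$ would itself be an articulation). The genuinely delicate case, and the one I expect to be the main obstacle, is $c_v(G)=2$: here (a) and (b) are vacuous and only (c') is available, so I must show that in the two-clique-plus-two-connectors structure a Hamiltonian $u$--$v$ path exists for \emph{every} placement of $u,v$ that is not a two-element cut. This reduces to a finite but careful case analysis according to whether $u,v$ fall in the same clique, in different cliques, or among $\{a,b\}$, tracking the adjacencies of $a,b$ to $M_1$ and $M_2$ (controlled through Proposition~\ref{prop:menger} and $2$-connectivity) and verifying that exactly the configurations excluded by (c') are those in which the two connectors cannot both be used to stitch the cliques together. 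Once the $2$-connected case is in hand, combining it with the other connectivity cases and the necessity direction closes the equivalence for (c'), and the reconciliation of (c) and (c') from the second step upgrades the characterisation to its stated (c)-form.
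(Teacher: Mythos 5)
Your necessity argument, your reconciliation of (c) with (c'), and your treatment of the cases $c_v(G)=1$ (two cliques through an articulation) and $c_v(G)\geq 3$ (Chv\'atal--Erd\H{o}s) are all correct and essentially match the paper. The problem is the case you yourself flag as ``the genuinely delicate case'': for $c_v(G)=2$ you never actually give a proof. You describe the structure (two cliques $M_1,M_2$ plus two connectors $a,b$ relative to one fixed minimum cut) and then state that the result ``reduces to a finite but careful case analysis \ldots verifying that exactly the configurations excluded by (c') are those in which the two connectors cannot both be used to stitch the cliques together.'' That verification \emph{is} the theorem in this case, and it is not routine: one must treat $u,v$ in the same clique, in different cliques, and coinciding with $a$ or $b$, and in each failing configuration exhibit $\{u,v\}$ as a $2$-cut rather than merely note that one particular stitching pattern is blocked (for instance, when $u,v\in M_1$, the obstruction analysis must rule out, via $2$-connectivity, configurations such as $N(a)\cap M_1=N(b)\cap M_1=\{w\}$ before concluding that the only true failure is $N(a)\cap M_1=\{u\}$, $N(b)\cap M_1=\{v\}$ with $|M_1|\geq 3$, which is indeed a $2$-cut). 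As written, the core of the sufficiency proof is a plan, not an argument.

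It is worth noting that the paper sidesteps this enumeration entirely in the $2$-connected case: it takes a longest $u$--$v$ path $P$, assumes a vertex $x$ is missed, applies Proposition~\ref{prop:menger} to get two internally disjoint paths from $x$ to distinct vertices $y_1,y_2$ of $P$, and then either (i) $y_1,y_2$ can be chosen with one of them not an endpoint of $P$, in which case $3K_1$-freeness forces the edge $y_1'y_2'$ between their successors and yields a longer $u$--$v$ path (contradiction), or (ii) every such pair is $\{u,v\}$, in which case $\{u,v\}$ is a minimum cut, violating (c). If you want to complete your proposal along your own lines, you must carry out the full case analysis sketched above; alternatively, adopting this extension--rotation argument replaces all of it with a few lines and is the route the paper takes.
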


\begin{proof} We will first prove that (c') is (under the assumption of (a) and (b)) equivalent to (c).
Clearly, if $\{u,v\}$ is not a vertex cut in $G$, then it is not a minimum vertex cut, and thus (c') implies (c). For the opposite implication, suppose that (a), (b) and (c) are satisfied and (c') is violated, i.e., $\{u,v\}$ is a vertex cut in $G$. Then $\{u,v\}$ is a minimal vertex cut, otherwise $u$ or $v$ is an articulation point which is forbidden by condition (a). Now (c) implies that there is a smaller cut, i.e., an articulation point, say $x$, in $G$.  By (a), $x \neq u,v$,  and by (b), $u,v$ are in different components of $G - \{x\}$. Both of these two components are cliques since $G$ is $3K_1$-free. Then $\{u,v\}$ is a minimal vertex cut if and only if $u,v$ are the only neighbours of $x$ and each of the components of $G - \{x\}$ contains at least two vertices. But then both $u,v$ are articulation points, a contradiction with (a).  

Secondly, if at least one of the conditions (a)-(c') is violated, then $G$ does not have a Hamiltonian path $P_{u,v}$.

Finally,
assume that all of the conditions (a)-(c) are satisfied. We will show that $G$ has a Hamiltonian path $P_{u,v}$.

\emph{Case 1. $G$ is connected but not 2-connected.} There exists an articulation point $x$. Since $G$ is $3K_1$-free, $G-\{x\}$ has exactly 2 connected components. Let us denote them by $Q_1, Q_2$. Since $G$ is $3K_1$-free, both of the components are cliques, $x$ is adjacent to all vertices of at least one of them, say $Q_2$, and $x$ is adjacent to at least one vertex, say $x'$, in the other clique $Q_1$. 

The vertex $x$ is different from $u,v$ by (a) and $u,v$ are in different components of $G-\{x\}$ by (b), say $u \in Q_1$ and $v \in Q_2$. If $u=x'$ and $|Q_1|>1$, there is another vertex $x''\in Q_1$ adjacent to $x$ because $u$ is not an articulation point. We obtain a Hamiltonian path $P_{u,v}$ by concatenating a path in $G[Q_1]$ starting in vertex $u$, passing through all vertices of $Q_1$ and ending in $x'$ (or in $x''$, if $u=x'$ and $|Q_1|>1$) with a path $P_{x,v}$ in $G[Q_2\cup \{x\}]$ passing through all vertices of $Q_2$ and ending in vertex $v$.

\emph{Case 2. $G$ is $2$-connected but not $3$-connected.} We will use a similar approach as in \cite{chvatal1972note}. Take a longest path $P$ from $u$ to $v$ and suppose for a contradiction that this is not a Hamiltonian path. Then there exists a vertex $x \in V(G) \setminus V(P)$ and since $G$ is 2-connected, $P$ contains at least 3 vertices. By Proposition~\ref{prop:menger}, there exist vertices $y_1,y_2$ (ordered in the direction from $u$ to $v$) on $P$ together with internally disjoint paths $P_1,P_2$ from $x$ to $y_1,y_2$. If $y_1,y_2$ are consecutive on $P$, then we obtain a longer path by replacing the edge $y_1y_2$ by the concatenation of the paths $P^{-1}_1$ and $P_2$, contradicting the maximality of $P$. Thus we assume that no two disjoint paths from $x$ to $P$ end in consecutive vertices of $P$.

If $y_1=u$ and $y_2=v$ and there is no path from $x$ to $V(P)$ ending in a vertex different from $u$ and $v$, then $\{u,v\}$ is a vertex cut in $G$. Since $G$ is 2-connected, it is also a minimum vertex cut violating condition (c). Hence we may choose $y_1,y_2$ so that at least one of them is not an end-vertex of $P$. Assume without loss of generality that $y_2 \neq v$ (the other case is symmetric). Denote by $y_1'$ the successor of $y_1$ and by $y_2'$ the successor of $y_2$ on the path $P$ (in the direction from $u$ to $v$). Then $xy_1', xy_2'$ are not edges in $G$, otherwise we would have a longer path. Since $G$ is $3K_1$-free, $y_1'y_2'$ is an edge. But then the path obtained by concatenating the subpath of $P$ from $u$ to $y_1$, $P_1^{-1}$, $P_2$, the subpath of $P^{-1}$ from $y_2$ to $y_1'$, the edge $y_1'y_2'$ and the subpath of $P$ from $y_2'$ to $v$, is a path from $u$ to $v$ longer than $P$, a contradiction. 

\emph{Case 3. $G$ is at least 3-connected.} In this case, $G$ is Hamiltonian connected by Proposition~\ref{prop:chvatal}. 
\end{proof}

\begin{theorem} \label{thm:3K1u}
Let $G$ be a connected $3K_1$-free graph. Then $G$ has a Hamiltonian path starting at $u \in V(G)$ if and only if $u$ is not an articulation point in $G$.
\end{theorem}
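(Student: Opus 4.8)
The plan is to prove both implications, reducing the harder (sufficiency) direction to Theorem~\ref{thm:3K1_uv}. For necessity I would argue the contrapositive: if $u$ is an articulation point, then $G-\{u\}$ has at least two connected components, whereas any Hamiltonian path starting at $u$, say $u=p_0,p_1,\dots,p_{n-1}$, has the property that $p_1,\dots,p_{n-1}$ is a path avoiding $u$ and therefore lies inside a single component of $G-\{u\}$. Since these vertices exhaust $V(G)\setminus\{u\}$, the graph $G-\{u\}$ would have to be connected, a contradiction. This disposes of the ``only if'' direction in a few lines.

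For sufficiency, assume $u$ is not an articulation point and split according to connectivity. If $G$ is $2$-connected, then since $\alpha(G)\le 2<s+1$ for the connectivity $s\ge 2$, Proposition~\ref{prop:chvatal}.2 yields a Hamiltonian cycle; deleting one of the two cycle-edges incident with $u$ produces a Hamiltonian path with endpoint $u$, and there are no articulation points to worry about. So the substantive case is when $G$ is connected but not $2$-connected, where I intend to exhibit a second vertex $v$ for which the pair $(u,v)$ satisfies conditions (a)--(c) of Theorem~\ref{thm:3K1_uv}; the resulting Hamiltonian path $P_{u,v}$ then starts at $u$.

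To find $v$, fix an articulation point $x$. Because $G$ is $3K_1$-free, $G-\{x\}$ has exactly two components $Q_1,Q_2$, each of which is a clique, and (else $x$ together with a non-neighbour in each $Q_i$ would form a $3K_1$) $x$ is adjacent to all of at least one clique, say $Q_2$. The key structural step is to pin down \emph{all} articulation points: no vertex of $Q_2$ is a cut vertex, since $Q_2\cup\{x\}$ is a clique that remains attached to $Q_1$ through $x$; and a vertex of $Q_1$ is a cut vertex only if it is the unique neighbour of $x$ in $Q_1$ and $|Q_1|\ge 2$. Hence $Art(G)\subseteq\{x,w_1\}$, where $w_1$ denotes this possible exceptional vertex of $Q_1$. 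Now I would choose $v$ to be a non-articulation vertex lying in whichever of $Q_1,Q_2$ does not contain $u$; such a vertex exists because each $Q_i$ contains a non-cut vertex, and $u\ne x$ places $u$ in exactly one of the two cliques. In particular $v\ne u$ and, with $u\ne w_1$ coming for free (as $u$ is not an articulation point), both $u$ and $v$ differ from $w_1$.

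Finally I would verify the three conditions for $(u,v)$. Condition (a) holds by the choice of $v$ and the hypothesis on $u$; condition (c) is immediate because $G$ has an articulation point, so its minimum cut has size $1$ and the two-element set $\{u,v\}$ cannot be minimum. The only real work is condition (b). For $x$ itself, $u$ and $v$ sit in the two different components $Q_1,Q_2$; for the possible second articulation point $w_1$, the components of $G-\{w_1\}$ are $Q_1\setminus\{w_1\}$ and $Q_2\cup\{x\}$, and since $u,v$ lie on opposite sides of the $Q_1/Q_2$ split and both avoid $w_1$, they again fall in different components. I expect checking (b) against \emph{every} articulation point---and hence the need to establish that there are at most two of them---to be the main obstacle; once the structure of $G-\{x\}$ is understood, the conclusion follows by a direct appeal to Theorem~\ref{thm:3K1_uv}.
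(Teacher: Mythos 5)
Your proposal is correct, but in the substantive case it takes a genuinely different route from the paper. Both arguments dispose of the $2$-connected case identically, via Proposition~\ref{prop:chvatal}.2 (Hamiltonian cycle, drop an edge at $u$), and your contrapositive for necessity is the standard observation the paper leaves implicit. The difference is in the connected-but-not-$2$-connected case: the paper builds the Hamiltonian path \emph{directly} --- traverse the clique containing $u$ ending at a neighbour of $x$ (which exists precisely because $u$ is not a cut vertex, unless $u$ is alone in its component), step across $x$, and sweep the other clique in any order --- whereas you \emph{reduce} to Theorem~\ref{thm:3K1_uv}: you classify all articulation points ($Art(G)\subseteq\{x,w_1\}$ with $w_1$ the possible unique neighbour of $x$ in $Q_1$), choose a non-cut vertex $v$ on the side opposite $u$, and check (a)--(c). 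Your verification is sound; in particular your use of condition (c) rather than (c') is exactly right --- since $G$ has an articulation point, no two-element set can be a \emph{minimum} cut, so (c) holds vacuously --- and your check of (b) against both possible cut vertices is complete. What each approach buys: the paper's construction is shorter and self-contained, needing none of the machinery of Theorem~\ref{thm:3K1_uv}; your reduction costs the extra lemma pinning down $Art(G)$, but that structural fact (a connected, non-$2$-connected $3K_1$-free graph has at most two articulation points, located as you describe) is one the paper itself re-derives informally several times in the proofs of Theorems~\ref{thm:4k1} and~\ref{thm:4k1_1con}, so making it explicit here has independent value.
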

\begin{proof}
If $u$ is an articulation point then such Hamiltonian path does not exist. 

Suppose that $u$ is not an articulation point and that $G$ is connected but not 2-connected. There is an articulation point $x$ in $G$ and $x \neq u$. The graph $G-\{x\}$ has exactly two components and they are cliques. We construct a Hamiltonian path starting at vertex $u$, going through all vertices of the clique containing $u$ to a neighbour of $x$ (note that either $u$ is the only vertex in its component, or $x$ is adjacent to another vertex in this component, since $u$ would be an articulation point otherwise), moving through $x$ to the other component and then traversing all the vertices of the other clique in an arbitrary order.

If $G$ is at least 2-connected, then it has a Hamiltonian cycle by Proposition~\ref{prop:chvatal}, thus it has a Hamiltonian path starting at any vertex.
\end{proof}

\begin{theorem} \label{thm:3K1pathcover}
Let $G$ be a connected $3K_1$-free graph. Then $G$ has a path cover $\mathrm{PC}(u,v)$ for every two distinct $u,v \in V(G)$.
\end{theorem}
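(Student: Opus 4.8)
The plan is to split according to the vertex connectivity of $G$, treating the $2$-connected case and the case with an articulation point separately.

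First, suppose $G$ is $2$-connected. Since $\alpha(G)\le 2$, Proposition~\ref{prop:chvatal}.2 guarantees a Hamiltonian cycle $C=w_1w_2\cdots w_nw_1$ with $n\ge 3$. Writing $u=w_i$ and $v=w_j$ with $i\neq j$, I would delete the two cycle edges $w_{i-1}w_i$ and $w_{j-1}w_j$ (indices modulo $n$). This splits $C$ into the two arcs $w_i,w_{i+1},\dots,w_{j-1}$ and $w_j,w_{j+1},\dots,w_{i-1}$, which are vertex-disjoint paths covering $V(G)$, one starting at $u$ and the other at $v$; hence they form a $\mathrm{PC}(u,v)$. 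The only point to verify is that the two deleted edges are genuinely distinct, which holds because $i\neq j$ and $n\ge 3$.

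The more interesting case is when $G$ is connected but not $2$-connected. Here I would first establish the structural fact that $V(G)$ can be partitioned into two cliques. Indeed, for any articulation point $x$, the graph $G-\{x\}$ has exactly two components (three would yield a $3K_1$), both of which are cliques (a non-edge inside one component together with any vertex of the other would form a $3K_1$); moreover $x$ is adjacent to all vertices of at least one component, say $Q_2$ (otherwise a non-neighbour of $x$ in each component yields a $3K_1$). Thus $A:=Q_1$ and $B:=Q_2\cup\{x\}$ are two cliques partitioning $V(G)$, a reusable observation I would state as a small lemma.

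With this partition in hand, I would finish by a short case analysis on the location of $u$ and $v$. If they lie in different cliques, traversing each clique from its respective endpoint gives the two paths immediately, since a clique is traversable as a path from any starting vertex and no cross edges are needed. If $u$ and $v$ lie in the same clique, say $A$, then the path starting at $v$ can absorb the remaining vertices of $A$, while the path starting at $u$ must additionally cover all of $B$: when $u$ has a neighbour $b\in B$ I route $u,b,\dots$ through all of $B$, and symmetrically if $v$ does. I expect the main obstacle to be the remaining configuration, where neither $u$ nor $v$ is adjacent to $B$; there connectivity supplies some $a_0\in A\setminus\{u,v\}$ adjacent to a vertex $b_0\in B$, and I would take the path $u,a_0,b_0,\dots$ covering $\{u,a_0\}\cup B$ together with the path starting at $v$ covering $A\setminus\{u,a_0\}$. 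Checking that these two paths are disjoint and exhaust $V(G)$ is routine, and I anticipate the bulk of the care going into this same-clique subcase and into a clean statement of the two-clique partition.
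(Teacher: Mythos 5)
Your proof is correct, but it takes a different route from the paper's. The paper's argument is a two-line reduction to Theorem~\ref{thm:3K1u}: if $G$ has a Hamiltonian path starting at $u$, cutting it at the edge entering $v$ already yields a $\mathrm{PC}(u,v)$; otherwise Theorem~\ref{thm:3K1u} forces $u$ itself to be an articulation point, and then the two components of $G-\{u\}$ are cliques, one of them containing $v$, so the two paths can be read off immediately with no case analysis on the positions of $u$ and $v$. You instead split on the connectivity of $G$: in the 2-connected case you re-derive a Hamiltonian cycle from Proposition~\ref{prop:chvatal}.2 and split it into two arcs (the same cutting trick, applied to a cycle rather than a path), and in the 1-connected case you build a two-clique partition of $V(G)$ from an arbitrary articulation point and then settle three subcases by hand. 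What your version buys is self-containedness (it never invokes Theorem~\ref{thm:3K1u}) and an explicitly stated, reusable structural fact --- a connected but not 2-connected $3K_1$-free graph is the union of two cliques --- which the paper uses only implicitly, and repeatedly, throughout Section~\ref{sec:smallcases}; the price is the extra casework, all of which the paper sidesteps by pivoting on whether $u$ is an articulation point rather than on the connectivity of $G$. One minor point to patch: your Case 2 presumes that a connected, non-2-connected graph has an articulation point, which fails for $G=K_2$ (under the paper's convention the connectivity of a complete graph is $|V(G)|-1$, so $K_2$ is not 2-connected, yet it has no cut vertex); there $\mathrm{PC}(u,v)$ consists of two singleton paths, so this is a one-line fix --- and the paper's own proof of Theorem~\ref{thm:3K1u} glosses over the same degenerate case.
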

\begin{proof}
If $G$ has a Hamiltonian path starting at vertex $u$, then it has also a $\mathrm{PC}(u,v)$. Assume that there is no Hamiltonian path starting at vertex $u$ and thus, $u$ is an articulation point by Theorem~\ref{thm:3K1u}. Denote the components of $G-\{u\}$ by $Q_1, Q_2$ in a way that $v \in Q_1$. Since $G$ is $3K_1$-free, $Q_1, Q_2$ are cliques. We can obtain one path starting at $v$ going through all vertices of $Q_1$ and the other path starting at $u$ and going through all vertices of $Q_2$.
\end{proof}

\subsection{$4K_1$-free graphs}\label{sec:4k1}

\begin{theorem}\label{thm:4k1}
Let $G$ be a connected $4K_1$-free graph. Then $G$ has a Hamiltonian path if and only if 
\begin{enumerate}[label=(\alph*)]
\item for every articulation point $x$, the graph $G-\{x\}$ has exactly 2 components, and 
\item there are no 3 articulation points inducing a triangle in $G$.
\end{enumerate}
Moreover, if $G$ does not have a Hamiltonian path, then $G$ has a path cover of size 2.
\end{theorem}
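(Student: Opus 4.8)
The plan is to analyze the block--cut tree $T$ of $G$ and show that conditions (a) and (b) hold exactly when $T$ is a path; the two ways in which $T$ can branch will correspond precisely to the failure of (a) and of (b), and in both branching cases a path cover of size two will be immediate. So the whole theorem, including the \emph{Moreover} part, reduces to a structural trichotomy for connected $4K_1$-free graphs.

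First I would record the consequences of $4K_1$-freeness. For a cut vertex $x$, vertices from different components of $G-x$ are non-adjacent, so choosing one vertex per component shows $G-x$ has at most three components, and if it has three they are all cliques (two non-adjacent vertices in one component together with one representative from each of the other two would form a $4K_1$). The same averaging applied to a block is the key step: if a block $\beta$ contained three cut vertices $a,b,c$, then with representatives $a',b',c'$ in the parts hanging off them (pairwise non-adjacent, being separated by the respective cut vertices), any further vertex $w\in\beta\setminus\{a,b,c\}$ would be non-adjacent to all three and give a $4K_1$; hence $\beta=\{a,b,c\}$ is exactly a triangle, and moreover each hanging part must itself be a clique (two non-adjacent vertices in one of them plus representatives of the other two again give a $4K_1$). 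Finally, distinct end blocks contribute pairwise non-adjacent non-cut vertices, so $T$ has at most three leaves. A tree with at most three leaves is either a path or a spider with a single degree-three branch node, and that node is either a cut vertex (whence $G-x$ has three components, all cliques) or a triangle block (whence three cut vertices induce a triangle with three pendant cliques).

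This trichotomy matches the statement: $T$ is a path exactly when every cut vertex lies in two blocks and no block is a triangle of cut vertices, i.e. exactly when (a) and (b) hold; the cut-vertex spider is precisely the failure of (a), and the triangle-block spider the failure of (b). For sufficiency I would take $T$ to be a path $\beta_1,\dots,\beta_k$ with consecutive blocks sharing cut vertices $x_1,\dots,x_{k-1}$ and build a Hamiltonian path block by block, gluing the pieces at the shared cut vertices. An interior block satisfies that $\beta_i\setminus\{x_{i-1},x_i\}$ is a clique (the same $4K_1$ argument, now with one branch on each side), so a Hamiltonian path of $\beta_i$ from $x_{i-1}$ to $x_i$ is obtained by entering the clique at a neighbour of $x_{i-1}$, running through it in arbitrary order, and leaving at a neighbour of $x_i$, two-connectedness guaranteeing these neighbours can be chosen distinct. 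For an end block $\beta_1$, the set $\beta_1\setminus\{x_1\}$ is a connected $3K_1$-free graph (its independence number is at most two, since the part of $G$ beyond $x_1$ is non-empty), and two-connectedness of $\beta_1$ forces $x_1$ to have a neighbour that is not an articulation point of $\beta_1\setminus\{x_1\}$ (otherwise the cut vertex of some end clique of $\beta_1\setminus\{x_1\}$ would survive as a cut vertex of $\beta_1$); Theorem~\ref{thm:3K1u} then yields a Hamiltonian path of $\beta_1\setminus\{x_1\}$ starting at that neighbour, which extends by $x_1$ to a Hamiltonian path of $\beta_1$ ending at $x_1$. If $G$ is two-connected (so $T$ is a single node), the Hamiltonian path comes directly from Proposition~\ref{prop:chvatal}(1).

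For necessity, suppose $G$ has a Hamiltonian path $P$. Deleting a cut vertex $x$ splits $P$ into at most two subpaths, each lying in one component of $G-x$, so $G-x$ has at most two components and (a) holds. If $a,b,c$ were a triangle of cut vertices, each would separate off a part $A$, $B$, $C$ (pairwise disjoint) whose only outside neighbour is the corresponding cut vertex; counting the edges of $P$ crossing the cut $(A,V\setminus A)$, all of which are incident to $a$, shows $P$ must have an endpoint inside $A$ (otherwise $P$ is trapped in $A\cup\{a\}$ or never reaches $A$), and likewise for $B$ and $C$, so three disjoint parts would demand three endpoints from a two-ended path, a contradiction; hence (b) holds. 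Finally, when $G$ has no Hamiltonian path it lies in one of the two spider cases, and the clique structure of the legs makes a size-two path cover explicit: send one path through two of the cliques via the centre and leave the third clique together with its cut vertex as the second path. I expect the main work to be the structural trichotomy --- in particular checking that any branching in the block--cut tree forces the legs to be cliques and pins the branch node down to either a degree-three cut vertex or a triangle block; once that is in place, both the Hamiltonian-path construction and the path cover are routine, the only genuinely external input being Theorem~\ref{thm:3K1u} for the end blocks.
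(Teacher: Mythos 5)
Your proof is correct, but it follows a genuinely different route from the paper's. You turn conditions (a) and (b) into a global shape statement about the block--cut tree: it has at most three leaves by $4K_1$-freeness, so it is either a path (exactly when (a) and (b) hold) or a spider whose centre is a degree-three cut vertex (the negation of (a)) or a triangle block (the negation of (b), using your lemma that a block containing three cut vertices must be exactly a triangle); sufficiency then becomes a block-by-block construction along the path-shaped tree, with interior blocks handled directly through their clique structure and the two end blocks via Theorem~\ref{thm:3K1u}. The paper argues locally instead: it fixes a single articulation point $x$, observes that one component of $G-\{x\}$ is a clique and the other is $3K_1$-free, reduces the whole question to whether $x$ has a neighbour in the latter component that is not an articulation point of it (Theorem~\ref{thm:3K1u}), and shows that failure of this would violate (a) or (b); both proofs dispatch the 2-connected case with Chv\'atal--Erd\H{o}s (Proposition~\ref{prop:chvatal}), and the necessity and path-cover arguments essentially coincide (yours is more explicit in the triangle case, where the paper appeals to a figure). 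What your approach buys is a stronger by-product: the connected $4K_1$-free graphs without a Hamiltonian path are classified as exactly the two spider types, so in particular (a) and (b) can never fail simultaneously, and the ``Moreover'' clause becomes immediate from the classification. What the paper's approach buys is brevity: it needs none of the block-level lemmas your argument must verify (that interior blocks are cliques plus two cut vertices, that blocks with three cut vertices are triangles, that the tree has at most three leaves), because conditions (a) and (b) are used directly in a proof by contradiction and Theorem~\ref{thm:3K1u} absorbs all remaining structural work.
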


\begin{figure} 
\centering
\includegraphics[scale=0.8]{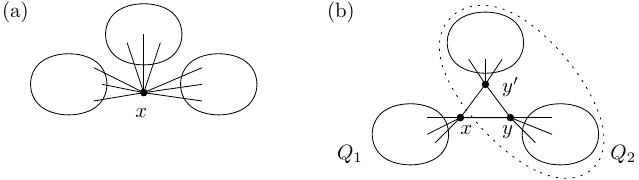}
\caption{An illustration of violation of the conditions in Theorem~\ref{thm:4k1}.}
\label{fig:4k1}
\end{figure}

\begin{proof}
If there exists an articulation point $x$ such that $G-\{x\}$ has at least 3 components, then $G$ cannot have a Hamiltonian path. Since $G$ is a $4K_1$-free graph, there cannot be more than 3 components and if there are exactly 3 components, then all of them must be cliques. Clearly, in this case, the path cover number of $G$ is equal to 2. 

If there are 3 articulation points $\{x,y,y'\}$ inducing a triangle in $G$, then  $G-\{x,y,y'\}$ has at most 3 components and all of them  are cliques (since $G$ is $4K_1$-free). At the same time each of the three articulation points must have a private component whose vertices are not adjacent to any of the remaining two articulation points.  Hence $G$ has the structure as depicted in Fig.~\ref{fig:4k1}~right, it clearly has no Hamiltonian path, but has a path cover of size two. See Figure~\ref{fig:4k1} as an illustration of these two cases.

Suppose that the conditions (a) and (b) are satisfied.

\emph{Case 1. $G$ is connected but not 2-connected.} 
Since condition (a) is satisfied, we have that for each articulation point $x$, $G-\{x\}$ has exactly 2 components. Let $x$ be an articulation point and $Q_1$, $Q_2$ be the components of $G-\{x\}$. Then at least one of $Q_1$ and $Q_2$, say $Q_1$, must be a clique and the other one, hence $Q_2$, must induce a $3K_1$-free graph.

Observe, that $G$ has a Hamiltonian path if and only if $G[Q_2]$ has a Hamiltonian path starting at some vertex $u \in Q_2$ adjacent to $x$,  and that happens if and only if $u$ is not an articulation point in $G[Q_2]$ (by Theorem~\ref{thm:3K1u}). 

Suppose that all neighbours of $x$ in $Q_2$ are articulation points of $G[Q_2]$. 
Let $y \in Q_2$ be such a vertex. Both components of $G[Q_2]-\{y\}$ are cliques, $y$ is adjacent to all vertices of at least one component and to at least one vertex in the other one, say $y'$. The vertices $y$ and $y'$ are the only possible articulation points in $G[Q_2]$, and thus $x$ is non-adjacent to all vertices of $Q_2$ except $y$ and possibly $y'$. If $x$ is not adjacent to $y'$, then $c(G-\{y\}) \geq 3$ and condition (a) would be violated. If $xy'$ is an edge, $y'$  is also an articulation point of $G[Q_2]$ and $y$ and $y'$ are the only neighbours of $x$ in $Q_2$. Then $x,y,y'$ are articulation points in $G$ inducing a triangle in $G$, violating condition (b). 

Thus, we can obtain a Hamiltonian path in $G$ by concatenating a Hamiltonian path in clique $G[Q_1]$, $x$ and a Hamiltonian path of $G[Q_2]$ starting in a neighbour $u \in {Q_2}$ of $x$. 

\emph{Case 2. $G$ is at least 2-connected.} In this case, $G$ has a Hamiltonian path by Proposition~\ref{prop:chvatal}.1.
\end{proof}

\begin{theorem}\label{thm:4k1_1con}
Let $G$ be a connected $4K_1$-free graph. The graph $G$ has a Hamiltonian path starting in a vertex $u$ if and only if the following conditions are satisfied.
\begin{enumerate}[label=(\alph*)]
\item The graph $G$  has a Hamiltonian path (i.e. the conditions of Theorem~\ref{thm:4k1} are satisfied).
\item The vertex $u$ is not an articulation point of $G$. 
\item Let $x\neq u$ be an articulation point of $G$ and let $Q_u$ be the component of $G-\{x\}$ containing $u$. If $G[Q_u]$ is  
not 2-connected, then for every articulation point $y\neq u$
of $G[Q_u]$, there exists a vertex $v \in {Q_u}$ 
adjacent to $x$ such that $u,v$ are in different components of $G[Q_u]-\{y\}$.
\item Let $x\neq u$ be an articulation point of $G$ and let $Q_u$ be the component of $G-\{x\}$ containing $u$. If $G[Q_u]$ is  
2-connected, then there exists $v \in {Q_u}$ adjacent to $x$ such that $v\neq u$ 
and $\{u,v\}$ does not form a minimum vertex cut in $G[Q_u]$.
\item There is no vertex $x$ such that $c(G-\{u,x\}) \geq 3$.
\item Let $G$ be 2-connected and let $\{x,y\}$ be a minimum vertex cut of $G$ such that $c(G-\{x,y\})=2$. Let $Q_1$ be a component of $G-\{x,y\}$ which is a clique and let $Q_2$ be the other component of $G-\{x,y\}$. If all neighbours of $x$ in ${Q_2}$ are articulation points of $G[Q_2]$, then $u \neq y$. 
\end{enumerate}
\end{theorem}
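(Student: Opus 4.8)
The plan is to prove both implications, organizing the argument by the vertex connectivity of $G$, exactly as in the proofs of Theorems~\ref{thm:3K1_uv} and \ref{thm:4k1}: the cases are ``$G$ not $2$-connected'', ``$G$ at least $3$-connected'', and the delicate middle case ``$G$ exactly $2$-connected''. Throughout I will use the basic observation that if a vertex set $S$ separates $G$ into components $Q,Q'$, then $\alpha(Q)+\alpha(Q')\le\alpha(G)\le 3$, so both $G[Q]$ and $G[Q']$ are $3K_1$-free and at least one of them is a clique; this is what lets me feed the pieces into Theorems~\ref{thm:3K1_uv}--\ref{thm:3K1pathcover}.

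For necessity, (a) and (b) are immediate (a Hamiltonian path from $u$ is in particular a Hamiltonian path, and an endpoint of a Hamiltonian path cannot be an articulation point). For (e), if $P$ is a Hamiltonian path from $u$, then deleting $u$ leaves a single path on $V(G)\setminus\{u\}$, and deleting one further vertex $x$ breaks it into at most two subpaths, so $c(G-\{u,x\})\le 2$. For (c) and (d), let $x$ be an articulation point with $u\in Q_u$; since $x$ is the only link between $Q_u$ and the rest, $P$ visits all of $Q_u$ before crossing $x$, hence $P$ restricted to $Q_u$ is a Hamiltonian path of the $3K_1$-free graph $G[Q_u]$ from $u$ to the neighbour $v$ of $x$ at which $P$ leaves $Q_u$. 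Applying Theorem~\ref{thm:3K1_uv} to $(G[Q_u],u,v)$ translates its three conditions verbatim into (c) when $G[Q_u]$ is not $2$-connected and into (d) when it is. For (f), I analyse how a Hamiltonian path can cross the $2$-cut $\{x,y\}$: since $Q_1,Q_2$ meet the rest only through $x$ and $y$, each used once, starting at $u=y$ forces the part covering $Q_2$ to be a single run that either begins or ends at a neighbour of $x$; by Theorems~\ref{thm:3K1_uv} and \ref{thm:3K1u} no Hamiltonian path of $G[Q_2]$ can have such an endpoint when every neighbour of $x$ in $Q_2$ is an articulation point of $G[Q_2]$, a contradiction.

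For sufficiency I construct the path. When $G$ is at least $3$-connected, $\alpha(G)\le 3\le s$ yields a Hamiltonian cycle by Proposition~\ref{prop:chvatal}.2, and deleting one of the two cycle edges at $u$ gives a Hamiltonian path with endpoint $u$. When $G$ is not $2$-connected, pick any articulation point $x$; then $u\ne x$ by (b) and $u$ lies in some component $Q_u$, the other side being $Q'$. A key point is that (b) together with (e) forces $u$ not to be an articulation point of $G[Q_u]$: otherwise $G[Q_u]-u$ splits into two cliques, and if $x$ is adjacent to both we get $c(G-\{u,x\})\ge 3$, contradicting (e), while if it is adjacent to only one, the other attaches to $G$ solely through $u$, making $u$ an articulation point of $G$ and contradicting (b). Granting this, I use (c)/(d) to produce, via Theorem~\ref{thm:3K1_uv}, a Hamiltonian path of $G[Q_u]$ from $u$ to a neighbour $v$ of $x$; I find a neighbour $w$ of $x$ in $Q'$ that is not an articulation point of $G[Q']$ (trivial if $Q'$ is a clique, and otherwise exactly the argument from the proof of Theorem~\ref{thm:4k1}, which is justified by (a)); I take a Hamiltonian path of $G[Q']$ from $w$ by Theorem~\ref{thm:3K1u}; and I concatenate the two through $x$.

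The remaining and hardest case is $G$ exactly $2$-connected. Here I take a minimum ($2$-element) cut $\{x,y\}$ and split on $c(G-\{x,y\})\in\{2,3\}$ (at most $3$ by $4K_1$-freeness). If there are three components, all are cliques and (e) forces $u$ to lie inside one of them rather than at $x$ or $y$ (otherwise $c(G-\{u,x\})$ or $c(G-\{u,y\})$ equals $3$); placing $u$'s clique at an end of the arrangement clique--$x$--clique--$y$--clique, which is realizable because $2$-connectivity makes every component adjacent to both $x$ and $y$, produces the path. If there are two components, $Q_1$ a clique and $Q_2$ its $3K_1$-free complement, the path must stitch a Hamiltonian path of $Q_1$ to a Hamiltonian path of $Q_2$ with prescribed endpoints through the gateways $x$ and $y$, and Theorem~\ref{thm:3K1_uv} supplies the $Q_2$-part. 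I expect the \emph{main obstacle} to be precisely this last assembly: one must enumerate the possible positions of $u$ (inside $Q_1$, inside $Q_2$, equal to $x$, or equal to $y$) and, for each, verify that the endpoints forced on the $Q_2$-path meet the hypotheses of Theorem~\ref{thm:3K1_uv}, with condition (f) being the exact obstruction that rules out the one infeasible placement $u=y$ (starting at the ``bad'' cut vertex $x$ stays feasible because one enters $Q_2$ through a good neighbour of $y$, which exists by (a)). A secondary delicate point, already needed for the sufficiency of (c) in the non-$2$-connected case, is to reconcile its ``for every articulation point there exists a neighbour of $x$'' phrasing with the single witness $v$ required by Theorem~\ref{thm:3K1_uv}; this rests on the fact that the articulation points of a connected $3K_1$-free graph lie on a chain of cliques, so that any neighbour of $x$ in the end block opposite $u$ is separated from $u$ by all of them simultaneously.
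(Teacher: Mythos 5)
Your necessity arguments and your treatment of the extreme cases ($G$ at least $3$-connected; $G$ not $2$-connected; the three-component subcase of the $2$-connected case) follow the same route as the paper and are essentially sound. The quantifier issue you flag for condition (c) is resolved in the paper exactly as you suggest, by repeated application of (c); note only that your phrase ``any neighbour of $x$ in the end block opposite $u$'' is slightly too generous, since such a neighbour could itself be an articulation point of $G[Q_u]$, violating condition (a) of Theorem~\ref{thm:3K1_uv} --- the second application of (c) is what yields a non-articulation-point witness.

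However, there is a genuine gap exactly where you place the ``main obstacle'', and your plan for it would fail. In the case where $G$ is $2$-connected with a cut $\{x,y\}$, $c(G-\{x,y\})=2$, $Q_1$ a clique and $Q_2$ the $3K_1$-free component, you propose to ``verify that the endpoints forced on the $Q_2$-path meet the hypotheses of Theorem~\ref{thm:3K1_uv}''. But under conditions (a)--(f) those hypotheses can fail for \emph{every} admissible pair of endpoints, and a Hamiltonian path starting at $u$ nevertheless exists; handling these configurations is the bulk of the paper's proof. Two configurations arise: (i) every neighbour of $x$ in $Q_2$ is an articulation point of $G[Q_2]$ (the paper's Case~2a) --- here (f) only excludes $u=y$, and for each other placement of $u$ the path must be built by hand, e.g.\ entering $Q_2$ through $y$ and terminating inside $Q_2$, which is not of the form ``$Q_1$-path plus $Q_2$-path with both ends prescribed''; and (ii) $G[Q_2]$ is $2$-connected but every pair consisting of a neighbour of $x$ and a neighbour of $y$ in $Q_2$ is a minimum vertex cut of $G[Q_2]$ (the paper's Case~2c) --- here no Hamiltonian path of $G[Q_2]$ between the two neighbourhoods exists at all, and the paper instead derives the precise structure of $G[Q_2]$ (Figure~\ref{fig:Q_2}), shows that $G[Q_2\cup\{x\}]$ or $G[Q_2\cup\{y\}]$ has a Hamiltonian cycle, and threads the path through that cycle, with separate arguments for $u\in Q_1$, $u\in\{x,y\}$, and $u\in Q_2$ (where one must additionally prove that $u$ lies in the neighbourhood of $x$ or $y$, invoking (f) once more). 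Your proposal contains no argument for either configuration, so the sufficiency proof cannot be completed by the reduction you describe.
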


Note that if we allowed $y=u$ in condition (c) and (c) were violated with $y=u$, then condition (e) would be violated.

\begin{figure}
\centering
\includegraphics[width=\textwidth]{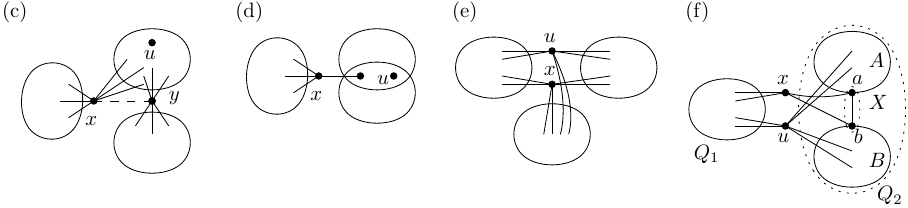}
\caption{An illustration of violation of conditions (c),(d),(e) and (f) in Theorem~\ref{thm:4k1_1con}. Dashed edge can be present or not.}
\label{fig:4k1_1con}
\end{figure}

\begin{proof} 
Clearly, if any of the conditions is not satisfied, then $G$ does not have a Hamiltonian path starting at vertex $u$. See Figure~\ref{fig:4k1_1con} for an illustration of possible situations.

Suppose that all of the conditions (a)--(f) are satisfied.

\textit{Case 1. $G$ is connected, but not $2$-connected.}

Let $x$ be an articulation point of $G$. Since condition (a) is satisfied, $G-\{x\}$ has two components. One of the components of $G-\{x\}$ is a clique, say $Q_1$, and the other one, say $Q_2$,  is a $3K_1$-free graph. 

\textit{Case 1a. $G[Q_2]$ is connected, but not 2-connected.} 
The vertex $u$ is not an articulation point of $G$ by condition (b), and thus $u\neq x$ and also $u$ is not the only neighbour of $x$ in its component of $G-\{x\}$ (unless this component contains only one vertex). Suppose first that $u\in Q_1$.  By condition (a), $G$ has a Hamiltonian path, say $P=P_1xP_2$ where $P_1$ is a Hamiltonian path in $G[Q_1]$ ending in a neighbour of $x$ and $P_2$ is a Hamiltonian path in $G[Q_2]$ starting in a neighbour of $x$. Since $G[Q_1]$ is a clique and $u$ is not the only neighbour of $x$ in $Q_1$ (unless $u$ being the only vertex of $Q_1$), $G[Q_1]$ contains a Hamiltonian path $P'_1$ starting in $u$ and ending in a neighbour of $x$. It follows that $P'=P'_1xP_2$ is a Hamiltonian path in $G$  starting in $u$. 

Suppose that $u \in Q_2$. The vertex $u$ cannot be an articulation point of $G[Q_2]$, otherwise $c(G-\{u,x\}) \geq 3$ and condition (e) of this theorem would be violated.
Observe that for every articulation point $y$ of $G[Q_2]$, $G[Q_2] - \{y\}$ has at most two components and they are cliques. By condition (c), $x$ has a neighbour, say $x'$, in the component of $G[Q_2] - \{y\}$ which does not contain $u$. If this $x'$ is an articulation point of $G[Q_2]$ (which happens when $x'$ is the only neighbour of $y$ in this component), then a repeated use of condition (c) shows that there is a neighbour, say $x''$, of $x$ in the component of $G[Q_2] - \{x'\}$ which does not contain $u$ such that $x''$ is not an articulation point of $G[Q_2]$, and thus this $x''$ lies in the component of $G[Q_2] - \{y\}$ which does not contain $u$.  Since $G[Q_2]$ is not 2-connected, $\{u,x'\}$ (nor $\{u,x''\}$ if $x'$ was an articulation point) cannot be a minimum vertex cut in $G[Q_2]$. 
Thus, since also the other two conditions of Theorem~\ref{thm:3K1_uv} are satisfied, there exists a Hamiltonian path of $G[Q_2]$ from $u$ to $x'$ (or $x''$).
Then we obtain a Hamiltonian path of $H$ by concatenating such a Hamiltonian path of $G[Q_2]$, $x$, and a Hamiltonian path of the clique $G[Q_1]$. 

\textit{Case 1b. $G[Q_2]$ is 2-connected, but not 3-connected.} 
If $u \in Q_1$, then a Hamiltonian path in $G$ can be easily obtained by concatenating a Hamiltonian path in $G[Q_1]$ starting at $u$, vertex $x$ and a Hamiltonian cycle of $G[Q_2]$ (it exists by Proposition~\ref{prop:chvatal}). 

If $u \in Q_2$, then $G$ has a Hamiltonian path if and only if $G[Q_2]$ has a Hamiltonian path starting in vertex $u$ and ending in a neighbour of $x$. By Theorem~\ref{thm:3K1_uv}, such a Hamiltonian path exists if there exists a neighbour $v \neq u$ of $x$ such that $\{u,v\}$ do not form a minimum vertex cut in $G[Q_2]$. But non-existence of such a neighbour would violate condition (d). 

\textit{Case 1c. $G[Q_2]$ is at least 3-connected.} In this case, $G[Q_2]$ is Hamiltonian connected which implies the existence of a Hamiltonian path in $G$. 

\textit{Case 2. $G$ is $2$-connected, but not $3$-connected.}

First suppose that there exists a (minimum) vertex cut $\{x,y\}$ such that $c(G- \{x,y \}) \geq 3$. Since $G$ is $4K_1$-free, $c(G-\{x,y\})>3$ cannot happen. Hence $c(G-\{x,y\}) = 3$ and 
all components of $G-\{x,y\}$ are cliques. Condition (e) implies that $u\neq x,y$. It is easy to see that $G$ has 
a Hamiltonian path starting in vertex $u$.

Assume from now on that for any vertex cut $\{x,y\}$ we have $c(G- \{x,y \}) =2$, and consider such a vertex cut $\{x,y\}$. Again, one of the components of $G- \{x,y \}$ is a clique, say $Q_1$, and the other one, say $Q_2$,  induces a $3K_1$-free subgraph of $G$. Denote by $X$ ($Y$) the neighbourhood of $x$ (of $y$, respectively) in $Q_2$. Observe that if there exists a Hamiltonian path of $G[Q_2]$ starting in a vertex from $X$ and ending in a vertex from $Y$, then $G$ has a Hamiltonian cycle, and thus it has a Hamiltonian path starting in any vertex. If $|X\cup Y|=1$, it must be $|Q_2|=1$ and $G$ contains a Hamiltonian cycle. Let further $|Q_2|>1$, and hence also $|X\cup Y|>1$. If there is no Hamiltonian path in $G[Q_2]$ starting in a vertex from $X$ and ending in a vertex from $Y$, then one of the conditions of Theorem~\ref{thm:3K1_uv} must be violated for any choice of different vertices, one from $X$ and the other one from $Y$. 

First suppose that $G[Q_2]$ is connected, but not $2$-connected. 
If $z$ is   an articulation point of $G[Q_2]$, then the components $G[Q_2] - \{z\}$ are cliques, $z$ is adjacent to all vertices in one component and to at least one vertex in the other component (if $z$ is adjacent to only one vertex in this component, then this vertex is also an articulation point and there are no other articulation points). Thus, $G[Q_2]$ contains at most 2 articulation points, and if it contains two of them, then they are adjacent. For a choice of two distinct vertices, one from $X$ and one from $Y$, the condition (c) of Theorem~\ref{thm:3K1_uv} for $G[Q_2]$ cannot be violated, since $G[Q_2]$ is not 2-connected. The conditions (a) or (b) could be violated in two following ways. 

\textit{Case 2a. $X$ or $Y$ contains only articulation points of $G[Q_2]$.} Since $G$ is $2$-connected,  only one of $X,Y$ can contain only articulation points, say $X$. From the observations above it follows, that $X$ contains at most two vertices and they are adjacent (if they are two). Denote the components of $G[Q_2] \setminus X $ as $A$ and $B$. Let $a \in X$ be a vertex adjacent to a vertex in $A$ and let $b \in X$ be a vertex adjacent to a vertex in $B$ (it is possible that $a=b$). Observe that $x$ must be adjacent to all vertices in $Q_1$. Otherwise, $x$, its non-neighbour in $Q_1$, a vertex from $A$ and a vertex from $B$ would form a $4K_1$.
The vertex $y$ must have at least one neighbour in $A$ and  at least one neighbour in $B$ such that they are not articulation points in $G[Q_2]$. 

If $u \in Q_1$, we obtain a Hamiltonian path starting in vertex $u$, going through all vertices of $Q_1$, $x$, $a$, all vertices in $A$ ending in a vertex adjacent to $y$, $y$ and all vertices in $B$. 
If $u=x$, then we obtain a Hamiltonian path starting in $x$ going through all vertices of $Q_1$, $y$ and a Hamiltonian path of $Q_2$ starting in a vertex in $Y$ which is not an articulation point of $G[Q_2]$. 
If $u=y$, then condition (f) would be violated. 

Suppose that $u \in Q_2$ and $u$ is not an articulation point of $G[Q_2]$. Since $y$ has neighbours in both $A$, $B$, we can always find a neighbour $y'$ of $y$ in $G[Q_2]$ such that $y'$ is not an articulation point of $G[Q_2]$, $y'$ and $u$ are in different components of $G[Q_2] \setminus X$. We obtain a Hamiltonian path starting in vertex $u$ by concatenating a Hamiltonian path of $G[Q_2]$ starting in $u$ and ending $y'$ (it exists by Theorem~\ref{thm:3K1_uv}) with a Hamiltonian path of $G[Q_1\cup\{x,y\}]$ starting in $y$ and ending in $x$. 

Finally, suppose that $u \in Q_2$ and $u$ is an articulation point in $G[Q_2]$. If $u$ is the only neighbour of $x$ in $Q_2$, then $\{ y,u \}$ form a vertex cut such that $c(G- \{y,u \}) =3$. If $u$ is not the only neighbour of $x$ in $Q_2$ (say $u \neq a$), then we obtain a Hamiltonian path starting in vertex $u$, going through all vertices of $B$, $y$, all vertices of $Q_1$, $x$, $a$ and all vertices of $A$.

\textit{Case 2b. There is an articulation point $z$ in $G[Q_2]$ such that both $X$ and $Y$ are  disjoint with one component of $G[Q_2]-\{z\}$.} This cannot happen, since $z$ would be an articulation point of entire $G$.

Let us summarize the case of $G[Q_2]$ not being 2-connected. Let $A$ and $B$ be the two components of that are created after the articulation points of $G[Q_2]$ are deleted. If Case~2a applies, we have shown that a Hamiltonian path starting in $u$ exists. If neither Case~2a nor Case~2b apply, then both $X$ and $Y$ intersect $A\cup B$, and $X\cup Y$ intersects both $A$ and $B$. Thus it is possible to choose $\overline{x}\in X\cap A$ and $\overline{y}\in Y\cap B$ (or $\overline{x}\in X\cap B$ and $\overline{y}\in Y\cap A$). For such a choice of $\overline{x},\overline{y}$, the conditions of Theorem~\ref{thm:3K1_uv} are satisfied, thus a Hamiltonian path in $G[Q_2]$ starting in $\overline{x}$ and ending in $\overline{y}$ exists, and a Hamiltonian path in $G$ starting in $u$ can be constructed as described above.  

Now suppose that $G[Q_2]$ is $2$-connected, but not $3$-connected.  In this case, conditions (a) and (b) for $G[Q_2]$ cannot be violated for any choice of two distinct vertices, but condition (c) could be violated. 

\textit{Case 2c. For any pair of distinct vertices $a \in X, b \in Y$, $\{a,b\}$ is a minimum vertex cut in $G[Q_2]$.} If $|X \cup Y| = 2$, then $G- \{X \cup Y\}$ has 3 components and this case has been handled already in the beginning of Case 2.

Suppose that $|X \cup Y| > 2.$ That means, that at least one of the $x,y$ has at least two different neighbours in $G[Q_2]$, say $x$. Take 3  different vertices $a,a',b \in X \cup Y$ such that $a,a'$ are neighbours of $x$ and $b$ is a neighbour of $y$. 

Let us recall that $G[Q_2]$ has a Hamiltonian cycle by Proposition~\ref{prop:chvatal}. We start with several observations about the structure of $G[Q_2]$. Consider any vertex $v \in X$. We have that $\{v,b\}$ is a minimum vertex cut in $G[Q_2]$ and thus $v$ is an articulation point in $G[Q_2-\{b\}]$. Graph $G[Q_2-\{b\}]$ remains $3K_1$-free, thus the components of $G[Q_2-\{v,b\}]$ are cliques and $v$ is adjacent to all vertices in one clique and to at least one vertex in the other clique. If $v$ has only one neighbour in the other clique, it is also an articulation point of $G[Q_2-\{b\}]$ and there are no other articulation points. This implies that $X=\{a, a'\}$. 
Similarly, if we consider vertex $b \in Y$, then $\{a,b\}$ is a minimum vertex cut in $G[Q_2]$ and $b$ must be an articulation point in $G[Q_2-\{a\}]$. Again, the components of $G[Q_2-\{a,b\}]$ are cliques, $b$ is adjacent to all vertices of at least one of them and to at least one vertex of the other clique, say $b'$. Thus, if $b'$ is the only neighbour of $b$ then $Y \subseteq \{b,b'\}$ and otherwise $Y = \{b\}$.  Several situations can occur, see Figure~\ref{fig:Q_2}. In each of them, it is easy to observe, that if $|X|=2$ then $G[Q_2 \cup \{x\}]$ has a Hamiltonian cycle. The same holds for $|Y|$ (or both at the same time). 

\begin{figure}
\centering
\includegraphics[scale=0.8]{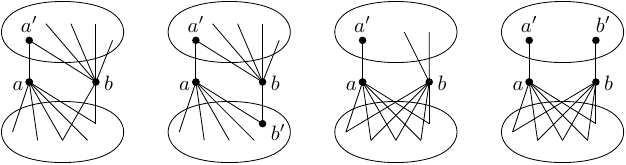}
\caption{An illustration of possible situations in $Q_2$ in Case~2c of Theorem~\ref{thm:4k1_1con}.}
\label{fig:Q_2}
\end{figure}

We distinguish several situations regarding the position of $u$ in $G$. 
\begin{itemize}
\item \textit{$u = x$ or $u=y$. } Let $u=x$, the situation for $u=y$ is symmetric. A Hamiltonian path starts at $u$, then goes through all vertices of $Q_1$, $y$ and then through all vertices of $Q_2$ ($G[Q_2]$ has a Hamiltonian cycle). 

\item \textit{$u \in Q_1$.} First suppose that $u$ is not the only neighbour of $y$ in $Q_1$ or $|Q_1|=1$. A Hamiltonian path starts at $u$, goes through all vertices of $Q_1$ ending in a neighbour of $y$ (in case $|Q_1|=1$ it goes directly to $y$), $y$ and then it goes along the Hamiltonian cycle of $G[Q_2 \cup \{x\}]$. 
Suppose that $u$ is the only neighbour of $y$ in $Q_1$ and $|Q_1|>1$. If $|Y| \geq 2$, i.e., if $Y=\{b,b'\}$, then a Hamiltonian path starts at $u$, goes through all vertices of $Q_1$ ending in a neighbour of $x$, $x$ and then it goes along the Hamiltonian cycle of $G[Q_2 \cup \{y\}]$.       
The case that $u$ is the only neighbour of $y$ in $Q_1$, $|Q_1|>1$ and $|Y| = 1$ cannot happen. Otherwise, we can obtain a $4K_1$ taking a vertex from $Q_1$ not adjacent to $y$, $y$, and one vertex from each component of $G[Q_2 - \{a,b\}]$. 

\item \textit{$u \in Q_2$.} Observe that if there exists a Hamiltonian path of $G[Q_2]$ starting at $u$ and ending at $X \cup Y$, then we are done. If no such path exists, then one of the conditions of Theorem~\ref{thm:3K1_uv} must be violated. Since $G[Q_2]$ is 2-connected, it must be condition (c). That means that for any $v \in X \cup Y$, $\{u,v\}$ form a minimum vertex cut in $G[Q_2]$. It is easy to observe (using Figure~\ref{fig:Q_2}) that there is no vertex $w \not \in X \cup Y$ such that $\{w,a\}$ and $\{w,b\}$ are both vertex cuts of $G[Q_2]$. Thus $u \in X \cup Y$. Fist suppose that $u = a$. We know that there is a Hamiltonian cycle of $G[Q_2]$ where $a,a'$ are consecutive (see Figure~\ref{fig:Q_2}). A Hamiltonian path of $G$ starts at $u$, goes through all vertices of $Q_2$ ending in $a'$, $x$, all vertices of $Q_1$ and $y$. We can proceed analogously if $u=a'$ or $u \in Y$ and $|Y|=2$. Thus, the last remaining case is $u \in Y \setminus X, |Y| = 1$ (thus $u = b$) and we show that this cannot happen. Since $G$ is $4K_1$-free, $y$ is adjacent to all vertices of $Q_1$. Observe that $\{x,u\}$ forms a minimum vertex cut in $G$ (since $G$ is 2-connected). One component of $G-\{x,u\}$ is $G[Q_1 \cup \{y\}]$ which is a clique and the other component is $G[Q_2 - \{u\}]$. Since $\{a,u\}$ and $\{a',u\}$ are cuts in $G[Q_2]$,  $a,a'$ are articulation points in $G[Q_2 - \{u\}]$. Thus $x$ is adjacent only to articulation points in $G[Q_2 - \{u\}]$ and condition (f) of this theorem would be violated. 
\end{itemize}

\textit{Case 3. $G$ is $3$-connected.} By Proposition~\ref{prop:chvatal}.2, $G$ has a Hamiltonian cycle, and hence it has a Hamiltonian path starting in an arbitrary vertex $u$. 
\end{proof}

\begin{theorem}\label{thm:4k1pc}
Let $G$ be a $4K_1$-free graph. If $G$ is connected, then $G$ has a path cover $\mathrm{PC}(u,v)$ if and only if the following conditions are satisfied.
\begin{enumerate}[label=(\alph*)]
\item For every articulation point $x$ such that $G-\{x\}$ has at least 3 components, $u,v$ are different from $x$ and belong to different components of $G-\{x\}$.
\item For any three articulation points $x,y,z$ inducing a triangle in $G$, 
\begin{itemize}
\item if both  $u,v$ are different from $x,y,z$, then $u,v$ are not in the same component of $G-\{x,y,z\}$,  
\item if exactly one of $u,v$ is in $\{x,y,z\}$, say $u$, then $v$ is not in the component of $G-\{x,y,z\}$ adjacent to $u$.
\end{itemize}
\item The pair $\{u,v\}$ does not form a vertex cut (not necessarily minimum) in $G$ such that $c(G-\{u,v\})\geq 3$.
\item Let $x\neq u$ be an articulation point of $G$ such that $c(G-\{x\}) \leq 2$, let $Q_1$ be the component of $G-\{x\}$ containing $u$, and let $Q_2$ be the other component  of $G-\{x\}$. Suppose that $G[Q_1]$ contains an articulation point $y\neq u$ and denote the component of $G[Q_1]-\{y\}$ containing $u$ by $J_1$ and the other component by $J_2$. If $v \in J_1$, $|J_1| > 2$, and $x$ is adjacent only to vertices from $J_1 \cup \{y\}$ in $G[Q_1]$, then there exists $x' \in J_1, x' \neq u,v $ adjacent to $x$ or $y$.
\end{enumerate}
If $G$ is not connected, then there is a $\mathrm{PC}(u,v)$ if and only if $G$ has exactly 2 components, $u,v$ are in different components and none of $u,v$ is an articulation point in its component. 
\end{theorem}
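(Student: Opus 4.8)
The plan is to handle the disconnected and connected cases separately, and within the connected case to follow the split into necessity and sufficiency used in the earlier theorems. The disconnected case is quick: a path lies inside a single component, so two vertex-disjoint paths meet at most two components, and a $\mathrm{PC}(u,v)$ forces $G$ to have exactly two components with one path covering each. Since a $3K_1$ in one component together with any vertex of the other would be a $4K_1$, each component is $3K_1$-free, so by Theorem~\ref{thm:3K1u} the path rooted at $u$ (and likewise at $v$) exists exactly when $u$ (resp. $v$) is a non-articulation point of its own component; and both components are covered only if $u,v$ lie in different ones. This yields the stated equivalence.

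For the connected case the backbone is the following feature of $4K_1$-free graphs: after deleting a cut $S$ of size $1,2$ or $3$, every component not adjacent to all of $S$ is a clique attached to the rest only through $S$, so a single path can visit at most $|S|+1$ of the resulting components, and it can pick up such a pendant clique only with its attachment vertex serving as a pendant point. I would prove necessity by instantiating this. For (a), an articulation $x$ with three pendant cliques forces the path containing $x$ to cover at most two of them (one on each side of $x$) and the other path at most one, so covering all three requires $u,v\neq x$ lying in distinct components. For (b), the cliques private to the triangle $x,y,z$ are pendants attached at the triangle, and a short check shows that each of the two excluded placements of $u,v$ (both inside one pendant, or one root at a triangle vertex with the other root in the pendant attached there) traps one path inside a pendant while the surviving path cannot reach the remaining two pendants. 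For (c), if $\{u,v\}$ cuts $G$ into at least three components, the path rooted at $u$ cannot pass through $v$ and vice versa, so each path is confined to its root plus a single clique, covering at most two components. Condition (d) is the subtle one: when $u,v$ both lie deep inside the clique $J_1$, the cliques $Q_2$ and $J_2$ are dead ends reachable only through $x$ and $y$ respectively, so the two paths must leave $J_1$ at two distinct vertices adjacent to $x$ and to $y$; when $|J_1|>2$ at least one path traverses an interior vertex of $J_1$ and hence its exit vertex is neither $u$ nor $v$, forcing the extra neighbour $x'$.

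For sufficiency I would assume (a)--(d) and argue by vertex-connectivity. If $G$ is $3$-connected then $\alpha(G)\le 3<3+1$, so Proposition~\ref{prop:chvatal}.2 provides a Hamiltonian cycle, and cutting it at $u$ and at $v$ splits it into two paths rooted at $u$ and $v$, i.e.\ a $\mathrm{PC}(u,v)$. If $G$ is $2$-connected but not $3$-connected, then conditions (a), (b), (d) are vacuous (no articulation points) and only (c) survives; here I would decompose along a minimum $2$-cut $\{a,b\}$, cover the clique side freely and realise the $3K_1$-free side via Theorem~\ref{thm:3K1_uv}, sending one path out through $a$ and the other through $b$. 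The point is that the freedom of two independent roots collapses all potential obstructions to the single event that $\{u,v\}$ itself is a $3$-component cut, which is exactly what (c) excludes.

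The hard case, and the one consuming conditions (a), (b), (d), is $G$ connected but not $2$-connected. I would fix an articulation point $x$ and split on whether $G-\{x\}$ has three components (where (a) guarantees $u,v\neq x$ lie in distinct pendant cliques and the construction is direct) or two components $Q_1,Q_2$ with $Q_1$ a clique and $Q_2$ being $3K_1$-free; in the latter I would recurse on the position of $u,v$, placing one path into the clique $Q_2$ through $x$ and routing the other inside $Q_2$ by reusing Theorems~\ref{thm:3K1_uv}, \ref{thm:3K1u} and \ref{thm:4k1_1con}, and invoking (b) when a triangle of articulations appears and (c) when $\{u,v\}$ itself separates $G$. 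I expect the genuinely delicate step to be the nested configuration in which $u,v$ sit inside $J_1$ while both dead ends $Q_2$ and $J_2$ must still be served: there one must produce, by repeated application of Theorem~\ref{thm:3K1_uv}, two root-avoiding exits of $J_1$ towards $x$ and towards $y$, which is precisely the vertex $x'$ supplied by condition (d). The overall obstacle is thus the bookkeeping of matching the required path endpoints to the available $x$- and $y$-neighbours across all placements of $u$ and $v$.
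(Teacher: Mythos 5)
Your treatment of necessity, of the disconnected case, and of the $3$-connected case is sound, and you correctly identify condition (d) as the certificate needed when $u,v$ both sit inside $J_1$ with the two dead-end cliques $Q_2$ and $J_2$ attached through $x$ and $y$. The genuine gap is in your sufficiency argument for the case where $G$ is $2$-connected but not $3$-connected. There you assert that ``the freedom of two independent roots collapses all potential obstructions to the single event that $\{u,v\}$ itself is a $3$-component cut,'' but this is exactly the statement that has to be proved, and the recipe you offer --- decompose along a minimum $2$-cut $\{a,b\}$, cover the clique side freely, realise the $3K_1$-free side via Theorem~\ref{thm:3K1_uv}, ``sending one path out through $a$ and the other through $b$'' --- fails precisely in the hard configurations. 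Concretely: suppose there is a vertex $x\neq v$ with $c(G-\{u,x\})\geq 3$. Then no Hamiltonian path starts at $u$ (condition (e) of Theorem~\ref{thm:4k1_1con} is violated), yet all of (a)--(d) of the present theorem may hold, so $\mathrm{PC}(u,v)$ must be built directly; but the obstructing minimum cut $\{u,x\}$ contains the root $u$ itself, and in such graphs every minimum $2$-cut may contain $u$, so there is nothing for your recipe to decompose along --- $u$ must be a path endpoint, not a transit vertex of the cut. The same happens when the cut $\{x,u\}$ realises the condition-(f) configuration of Theorem~\ref{thm:4k1_1con} (all neighbours of $x$ in the non-clique side are articulation points of that side). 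These two situations are exactly the paper's Cases~2d and~2e, each of which needs its own multi-branch construction depending on where $v$ lies (in which of the three cliques, whether $v$ is the unique neighbour of $x$ there, whether $|Q_i|=1$, etc.); your sketch provides no mechanism for them.

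A secondary weakness in the same case: the subroutine you invoke on the $3K_1$-free side is not supplied by the theorems you cite. What is actually needed is, variously, a Hamiltonian path with one endpoint prescribed and the other constrained to lie in the neighbourhood of $a$ or $b$, or a two-path cover with prescribed starts and constrained ends; Theorem~\ref{thm:3K1_uv} prescribes both endpoints exactly, Theorem~\ref{thm:3K1u} prescribes one, and Theorem~\ref{thm:3K1pathcover} prescribes only the two starts, so every application requires an additional argument selecting admissible neighbours of $a$ and $b$ that avoid the minimum-cut obstruction of Theorem~\ref{thm:3K1_uv}(c). For comparison, the paper does not organise by connectivity at all: it first disposes of the case where a Hamiltonian path starts at $u$ or at $v$, and otherwise branches on which condition of Theorem~\ref{thm:4k1_1con} fails for $u$, constructing $\mathrm{PC}(u,v)$ separately in each failure mode; the $2$-connected difficulties are thereby isolated into the two explicit Cases~2d and~2e rather than hidden behind a collapse claim. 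Your plan would need the analogue of those two cases written out before it could be considered a proof.
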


\begin{figure}
\centering
\includegraphics[width=\textwidth]{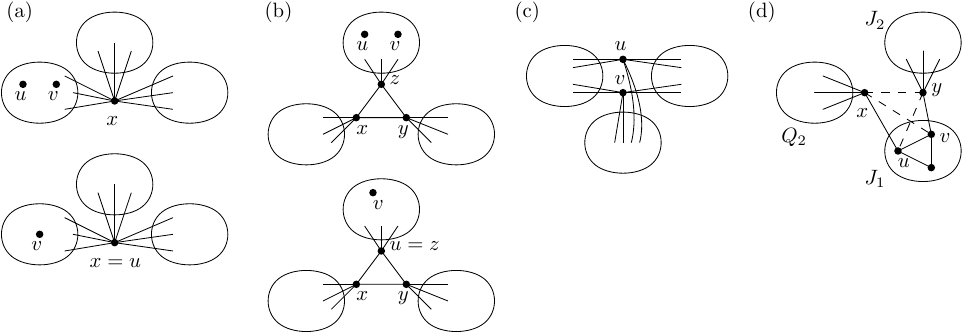}
\caption{An illustration of violations of conditions (a),(b),(c) and (d) in Theorem~\ref{thm:4k1pc}. Dashed edges can be present or not.}
\label{fig:4k1pathcover}
\end{figure}

\begin{proof}
If $G$ is not connected, the proof follows by Theorem~\ref{thm:3K1u}.

Suppose from now on that $G$ is connected. It is clear that if any of the conditions is not satisfied, then $\mathrm{PC}(u,v)$ does not exist. See Figure~\ref{fig:4k1pathcover} for an illustration of possible violations of the conditions. 

Suppose that all of the conditions (a)--(d) are satisfied. If there exists a Hamiltonian path starting in $u$ (without loss of generality, we can interchange $u$ and $v$), then there is also a path cover $\mathrm{PC}(u,v)$. 

If such a path does not exist, then at least one of the conditions in Theorem \ref{thm:4k1_1con} is violated.  

\textit{Case 1: $G$ does not have a Hamiltonian path.} That means, that condition (a) of Theorem \ref{thm:4k1_1con} is violated. If $G$ does not have any Hamiltonian path, then by Theorem \ref{thm:4k1}, there exists an articulation point $x$, such that $G-\{x\}$ has at least 3 components or a triangle induced by articulation points $x,y,z$ in $G$ such that $c(G-\{x,y,z\}) \geq 3$. In both cases, since $G$ is $4K_1$-free,  there are exactly 3 components  and all of them are cliques. 

\textit{Case 1a: There exists an articulation point $x$, such that $c(G-\{x\})  \geq 3$. } By condition (a) of this theorem we have that $x \neq u,v$ and $u,v$ are in different components of $G-\{x\}$. Denote the component containing $u$ by $Q_u$, the component containing $v$ by $Q_v$, and the third component by $Q_w$. If $Q_u=\{u\}$, we obtain the first path of $\mathrm{PC}(u,v)$ starting at $u$,  continuing to $x$ and then to all vertices of $Q_w$. The second path starts at $v$ and traverses all vertices of $Q_v$. Similarly if $Q_v$ has only one vertex. For the rest of this case, suppose that $|Q_u|>1$ and $|Q_v|>1$. If $u$ is the only neighbour of $x$ in $Q_u$ and $v$ is the only neighbour of $x$ in $Q_v$, then $\{u,v\}$ is a vertex cut in $G$ with $c(G-\{u,v\})\geq 3$. That would violate the condition (c) of this theorem. Thus, $x$ has at least two neighbours in at least one of $Q_u$, $Q_v$, say in $Q_u$. Then we obtain the first path of $\mathrm{PC}(u,v)$ starting at $u$, traversing all the vertices of $Q_u$ (ending in a neighbour of $x$), then continuing to $x$ and then to all vertices of $Q_w$. The second path starts at $v$ and traverses all vertices of $Q_v$. 

\textit{Case 1b: There exists a triangle induced by articulation points $x,y,z$ in $G$.} 
Denote the component adjacent to $x$ by $Q_x$, the component adjacent to $y$ by  $Q_y$, and the component adjacent to $z$ by $Q_z$.  
Observe that $x$ is adjacent to all vertices in $Q_x$, otherwise $x$, a vertex from $Q_x$ not adjacent to $x$, a vertex from $Q_y$, and a vertex from $Q_z$ would create a $4K_1$. Analogously, $y$ is adjacent to all vertices in $Q_y$ and $z$ is adjacent to all vertices in $Q_z$.  

If both $u,v$ belong to $\{x,y,z\}$, then they would form a vertex cut such that $c(G-\{u,v\}) \geq 3$ and the condition (c) of this theorem would be violated. 

If exactly one of $u,v$ belongs to $\{x,y,z\}$, say $u=z$,  then we can obtain one path starting at $u$ traversing all vertices of $Q_z$ and the second path starting at $v$ and traversing all vertices of cliques $Q_x, Q_y$. 

If none of $u,v$ belongs to $\{x,y,z\}$ and they are in different components of $G-\{x,y,z\}$, then it is clear that $G$ has a $\mathrm{PC}(u,v)$.

The last case of both $u,v$ belonging to the same component of $G-\{x,y,z\}$ is forbidden by condition (b) of this theorem.

\textit{Case 2. The graph $G$ does have a Hamiltonian path, but it has no Hamiltonian path starting in vertex $u$ nor in $v$}. That means that condition (a) of Theorem~\ref{thm:4k1_1con} is satisfied, but both for $u$ and for $v$, at least one of the conditions (b)--(f) of Theorem~\ref{thm:4k1_1con} is violated. We will show by a case analysis that even when one of these conditions is violated for just one of these vertices, say $u$, we can always find a path cover $\mathrm{PC}(u,v)$. 

\textit{Case 2a. Condition (b) of Theorem~\ref{thm:4k1_1con} is violated for $u$.} 
That means that $u$ is an articulation point of $G$.  One of the components of $G-\{u\}$ is a clique, say $Q_1$, and the other one, say $Q_2$, induces a $3K_1$-free graph (note here that since condition (a) of Theorem~\ref{thm:4k1_1con} is satisfied, $c(G-\{u\})\le 2$). 

First suppose that $v \in Q_1$. Let $P=P_1uP_2$ be a Hamiltonian path in $G$, with $P_1$ being a Hamiltonian path in $G[Q_1]$ ending in a neighbour of $u$ and $P_2$ being a Hamiltonian path in $G[Q_2]$ starting in a neighbour of $u$. Then $uP_2$ can be taken as one path and any Hamiltonian path of $G[Q_1]$ starting in $v$ as the other one for a path cover $\mathrm{PC}(u,v)$. 

Now suppose that $v \in Q_2$. 
If $v$ is an articulation point in $G[Q_2]$, then $G-\{u,v\}$ has at least 3 components, which is not possible (condition (c) of this theorem would be violated). If $v$ is not an articulation point of $G[Q_2]$, then there exists a Hamiltonian path of $G[Q_2]$ starting in $v$, which together with any Hamiltonian path of $G[Q_1\cup\{u\}]$ starting in $u$ forms a path cover $\mathrm{PC}(u,v)$.

\textit{Case 2b. The conditions (a)--(b) of  Theorem~\ref{thm:4k1_1con} are satisfied but condition (c) is violated for $u$.} 
Namely, $u$ is not an articulation point of $G$. Violation of condition (c) of Theorem~\ref{thm:4k1_1con} means that we have the following situation. There are vertices $x\neq u$, $y\neq u$ such that
\begin{itemize}
\item $x$ is an articulation point of $G$, 
\item $G-\{x\}$ has two components $Q_1$ and $Q_2$,
\item $Q_1$ contains $u$,
\item $y$ is an articulation point of $G[Q_1]$,
\item $G[Q_1]-\{y\}$ has two components $J_1$ and $J_2$,
\item if $J_1$ is the component containing $u$, then $x$ is non-adjacent to all vertices from $J_2$.
\end{itemize} 

Note that $G[Q_2], G[J_1], G[J_2]$ are cliques and $x$ is adjacent to all vertices in at least one of the sets $Q_2$ and $J_1$ (otherwise we would have a $4K_1$). 
We will distinguish several cases depending on the position of $v$. 
\begin{itemize}
\item 
Suppose that $v \in Q_2$ and there exists $x' \in Q_2$ adjacent to $x$, $x'\neq v$. If $u$ is not an articulation point of $G[Q_1]$ then we can obtain one path of $\mathrm{PC}(u,v)$ starting in $v$, traversing all vertices of $G[Q_2]$ (ending in $x'$), $x$. The second path is a Hamiltonian path of $G[Q_1]$ starting at $u$ (which existence is ensured by Theorem~\ref{thm:3K1u}). 
If $u$ is an articulation point of $G[Q_1]$, then it is adjacent to $y$ and also to all vertices in $J_1$. If $x$ is adjacent only to $u$ in $G[Q_1]$, then $c(G-\{u\}) \geq 3$ and condition (a) of this theorem is violated. Thus, $x$ is adjacent to $y$ or a vertex of $J_1$ other than $u$, say $u'$. If $x$ is adjacent to $y$ then we obtain one path of $\mathrm{PC}(u,v)$ starting in $v$, traversing all vertices of $G[Q_2]$ (ending in $x'$), $x$, $y$ and then all vertices of $G[J_2]$. The second path is a Hamiltonian path of $G[J_1]$ starting at $u$. If $x$ is adjacent to $u'$ then we obtain one path of $\mathrm{PC}(u,v)$ starting in $v$, traversing all vertices of $G[Q_2]$ (ending in $x'$), $x$, $u'$ and then all vertices of $J_1 \setminus \{u\}$. The second path starts at $u$ and then goes through $y$ and all vertices of $G[J_2]$. 
\item
Suppose that $v \in Q_2$ and $v$ is the only neighbour of $x$ in $G[Q_2]$. If $|Q_2| = 1$, then we can proceed as in the previous case. If $|Q_2| > 1$, then $x$ must be adjacent to all vertices in $J_1$, otherwise we would obtain a $4K_1$. 
\begin{itemize}
\item If $|J_1| = 1$ (it contains only vertex $u$) and $xy$ is not an edge, then $\{u,v\}$ forms a vertex cut in $G$ such that $c(G-\{u,v\})\geq 3$, which would violate condition (c) of this theorem. 
\item If $|J_1| = 1$ and $xy$ is an edge, then we can obtain one path starting at $v$ and traversing all vertices of $G[Q_2]$ and the second path traversing $u$, $x$, $y$, and all vertices in $G[J_2]$. 
\item Suppose $|J_1| > 1$. If $u$ is not an articulation point of $G[Q_1]$, take a Hamiltonian path $P_1$ of $G[Q_1]$ starting at $u$ and denote the first edge by $uu'$. Observe that $u' \in J_1$ and thus $xu'$ is an edge. Create $P'_1$ from $P_1$ by replacing the $uu'$ by the path $ uxu'$. Take $P'_1$ as one path of $\mathrm{PC}(u,v)$ and as the second path, take a path starting in $v$ and traversing all vertices of $Q_2$. 
If $u$ is an articulation point of $G[Q_1]$, then it is adjacent to $y$ and also all vertices in $G[J_1]$. If $xy$ is an edge, one path starts at $u$, then goes through all vertices of $G[J_1]$, $x$, $y$ and all vertices of $G[J_2]$. The second path is a path starting in $v$ traversing all vertices of $G[Q_2]$. If $xy$ is not an edge, then $c(G-\{u,v\}) \geq 3$ and condition (c) of this theorem is violated.
\end{itemize}
\item
If $v = x$ and $u$ is not an articulation point of $G[Q_1]$, then we obtain one path of $\mathrm{PC}(u,v)$ starting in $v$ and traversing all vertices of $G[Q_2]$, and  the second path is a Hamiltonian path of $G[Q_1]$ starting at $u$. 
If $u$ is an articulation point of $G[Q_1]$, then $c(G-\{u,v\}) \geq 3$ and condition (c) of this theorem is violated.
\item
If $v \in J_1$, then each Hamiltonian path of $G$ must traverse its vertices in this order (or in the reverse one): vertices of $Q_2$, $x$, vertices of $J_1$, $y$, vertices of $J_2$. Recall that $G[J_1]$ induces a clique. Our goal is to show that there exists a Hamiltonian path of $G$ which traverses vertices of $G$ in the following order: vertices of $Q_2$, $x$, some vertices of $J_1$, $u$, $v$ (or $v$,$u$), the rest of vertices of $J_1$, $y$, vertices of $J_2$. Obviously, this would give us a path cover $\mathrm{PC}(u,v)$. If $|J_1|=2$, then we are done. Thus, suppose $|J_1| > 2$. Several cases can occur.
\begin{itemize}
\item 
If $x$ and $y$ have exactly one common neighbour and no other neighbours in $J_1$, then $G$ has no Hamiltonian path and condition (a) of Theorem~\ref{thm:4k1_1con} is violated. 
\item 
Suppose that $x$ is adjacent only to $u$ (and possibly also $v$) in $J_1$ and $y$ is adjacent only to $v$ (and possibly also $u$) in $J_1$ (or vice versa). If $xy$ is not an edge, then the condition (c) of this theorem would be violated. If $xy$ is an edge, then the condition (d) of this theorem would be violated. 
\item 
Without loss of generality (the situation for $y$ is symmetric), suppose that $x$  has a neighbour $x'$ in $J_1$ other than $u,v$, and $y'$ is a neighbour of $y$ other than $x'$ (possibly $y'=u$ or $y'=v$). We can traverse vertices in the order: vertices of $Q_2$, $x$, $x'$, vertices of $J_1 \setminus \{x',y',u,v\}$ in an arbitrary order, $u$, $v$ (or vice versa), and finally $y'$ to get the sought Hamiltonian path. Note that it is possible for $y'$ to coincide with $u$ or $v$.  
\end{itemize}
\item  
If $v = y$, then we can proceed as in the case of $v=x$ since the situation is symmetric (just the roles of $x, Q_2$ and $y, J_2$ are interchanged).
\item 
If $v \in J_2$, then we can proceed as in the case of $v \in Q_2$ since the situation is symmetric (just the roles of $x, Q_2$ and $y, J_2$ are interchanged).
\end{itemize}

\textit{Case 2c. The conditions (a)--(c) of Theorem~\ref{thm:4k1_1con} are satisfied, but condition (d) is violated for $u$.}
Let $x\neq u$ be an articulation point of $G$ causing this violation. Let $Q_1$ be the component of $G-\{x\}$ containing $u$  and let $Q_2$ be the other component. Suppose that $G[Q_1]$ is 2-connected and that for every $x' \in {Q_1}$ adjacent to $x$, $x'\neq u$, the pair $\{u,x'\}$ forms a minimum vertex cut in $G[Q_1]$. 

Observe that if $u$ is adjacent to $x$, then $u$ cannot be the only neighbour of $x$ in $Q_1$, since in such a case $u$ would be an articulation point of $G$ and condition (b) of Theorem~\ref{thm:4k1_1con} would be violated. Hence $x$ has a neighbour, say $z$, in $Q_1$ such that $z\neq u$. Then $\{z,u\}$ is a vertex cut in $G[Q_1]$ and therefore $G[Q_1]-\{u\}$ is not 2-connected.  Moreover, $z$ is an articulation point of $G[Q_1]-\{u\}$ for every neighbour $z$ of $x$ in $Q_1$ such that $z\neq u$.

Suppose that $y\neq u$ is a neighbour of $x$ in $Q_1$. Denote the components of $G[Q_1]-\{u,y\}$ by $J_1$, $J_2$. Since $G[Q_1]-\{u\}$ is $3K_1$-free, both $J_1$, $J_2$ are cliques and $y$ is adjacent to all vertices in one component of $G[Q_1]-\{u,y\}$, say $J_1$. Let $y'$ be a neighbour of $y$ in $J_2$. If $y'$ is the only neighbour of $y$ in $J_2$, then $y'$ is also an articulation point of $G[Q_1]-\{u\}$ and there are no other articulation points. Thus, apart from $y$, $x$ can be adjacent only to $y'$  in $G[Q_1]-\{u\}$. If $y$ has more neighbours in $J_2$, then $y$ is the only neighbour of $x$ in  $G[Q_1]-\{u\}$.

Since $G$ is a $4K_1$-free graph, $x$ is adjacent to all vertices of $G[Q_2]$. Recall that $G[Q_1]$ is a 2-connected $3K_1$-free graph and thus, it has a Hamiltonian cycle. Moreover,  each of the vertices $u$ and $y$ has at least one neighbour in $J_1$ and at least one neighbour in $J_2$, and if $|J_2|>1$, then $u$ and $y$ together have at least two different neighbours in $J_2$.  Several cases can occur depending on the position of $v$. 
\begin{itemize}
\item 
If $v \in Q_2 \cup \{x\}$, then since $G[Q_2 \cup \{x\}]$ is a clique and $G[Q_1]$ has a Hamiltonian cycle, we are done. 
\item 
If $v \in J_1$, then one path starts at $v$ and traverses all vertices of $J_1$, while the other one starts at $u$, traverses all the vertices of $J_2$, then $y$, $x$ and all the vertices of $Q_2$. 
\item 
If $v \in J_2$,  then one path starts at $v$ and traverses all vertices of $G[J_2]$, while the other one starts at $u$ and traverses all vertices of $G[J_2]$, $y$, $x$, and then all the vertices of $G[Q_2]$.
\item 
If $v = y$ and $xy' \in E(G)$, $y'$ is the only neighbour of $y$ in $J_2$, and if $|J_2|>1$, $u$ has a neighbour in $J_2$ different from $y'$. Then as one path we take a path starting at $u$, traversing all the vertices of $J_2$ to $y'$, continuing to $x$ and finally traversing all the vertices of $Q_2$. The other path starts at $u$ and traverses all the vertices of $J_1$. 
\item
If $v = y$ and $xy' \not \in E(G)$, then $\{u,v\}$ violates the condition (c) of this theorem.
\end{itemize}

\textit{Case 2d. The conditions (a)--(d) of Theorem~\ref{thm:4k1_1con} are satisfied, but condition (e) is violated for $u$.}  There exists a vertex $x$ such that $c(G-\{u,x\}) \geq 3$. Obviously $u\neq x$ (since $G$ would have no Hamiltonian path otherwise) and $v\neq x$ (since condition (c) of this theorem would be violated otherwise). 

Since $G$ is $4K_1$-free, $G-\{u,x\}$ has exactly 3 components. Let us denote them by $Q_1, Q_2, Q_3$. All three of these components must be cliques of $G$. Let $P$ be a Hamiltonian path in $G$ (which is assumed to exist). The path must first visit all vertices of one component, say $Q_1$, then vertex $x$, then the vertices of another component, say $Q_2$, then vertex $u$, and then all vertices of the last component $Q_3$. Let $P_i$ be the part of $P$ in $Q_i$, for $i=1,2,3.$ We consider four cases:

\begin{itemize}
\item Suppose $v\in Q_1$. If $|Q_1|=1$, then we take as the first path the path starting in $v$, followed by $x$ and $P_2$, and as the other one the path starting in $u$ and followed by $P_3$. If $|Q_1|>1$ and $x$ has a neighbour $x_1\neq v$  in $Q_1$, then we take as the first path the path starting in $v$, traversing all vertices of $Q_1$ to $x_1$, followed by $x$ and $P_2$, and as the other one the path starting in $u$ and followed by $P_3$. Hence suppose that $|Q_1|>1$ and $v$ is the only neigbour of $x$ in $Q_1$. Condition (c) of this theorem implies that $G[\{x\}\cup Q_2 \cup Q_3]$ is connected, and thus $x$ is adjacent to a vertex $x_3\in Q_3$. If $|Q_3|=1$, then as one path we take the path starting in $u$, followed by $x_3$, $x$ and $P_2$, and as the other one a path starting in $v$ and followed by $P_1^{-1}$. If $|Q_3|>1$, then (since $G$ is 2-connected) $u$ and $x$ have at least two neighbours in $Q_3$, and we may choose $x_3$ to be different from a neighbour $u_3$ of $u$ in $Q_3$. Then as the first path we take a path starting in $u$, followed by a Hamiltonian path in $G[Q_3]$ from $u_3$ to $x_3$, followed by $x$ and $P_2$,  and as the other one a path starting in $v$ and followed by $P_1^{-1}$. 
\item We have already observed that $v\neq x$. 
\item If $v\in Q_2$ and $|Q_2|=1$, we take as one path the path starting in $v$, continuing with $x$ followed by $P_1$ traversed in the reverse order. The other path will start in $u$ and continue as $P_3$.
\item Suppose $v\in Q_2$ and $|Q_2|>1$. If $x$ has a neighbour $x_2\neq v$ in $Q_2$, we take as the first path the path starting in $v$, followed by a Hamiltonian path in $G[Q_2]$ ending in $x_2$, followed by $x$ and $P_1^{-1}$, and as the other one the path starting in $u$ and followed by $P_3$. If $v$ is the only neighbour of $x$ in $Q_2$, then (since $G$ is 2-connected) $x$ has a neighbour in $Q_3$. Similarly as in the case of $v\in Q_1$, we take as one path a path starting in $u$, followed by a Hamiltonian path in $G[Q_3]$ ending in a neighbour of $x$, followed by $x$ and $P_1^{-1}$, and as the other one the path $P_2$ (which necessarily starts in $v$). 
\item If $v\in Q_3$, the first path will start in $u$, followed by $P_2$ traversed in the reversed order, then by $x$ and finally by $P_1$, again traversed in the reversed order. The second path will start in $v$ and traverse all vertices of $Q_3$.  
\end{itemize}

\textit{Case 2e. The conditions (a)--(e) of Theorem~\ref{thm:4k1_1con} are satisfied, but condition (f) is violated for $u$.} Let $x$ and $y=u$ be two different vertices causing this violation. Let $Q_1$ be a component of $G-\{x,y\}$ inducing a clique and let $Q_2$ be the other component. Note that $G[Q_2]$ is not 2-connected, because the violation of condition (f) implies that $x$ is adjacent only to articulation points of $G[Q_2]$ in $Q_2$, and $x$ is adjacent to at least one vertex of $Q_2$ (otherwise $\{x,y\}$ would not be a minimum cut).  

We have just observed that $Q_2$ contains at least one pair of non-adjacent vertices which are both non-adjacent to $x$. Therefore,  vertex $x$ is adjacent to all the vertices in $Q_1$, otherwise $G$ would not be $4K_1$-free. 

The graph induced by $G[Q_2]$ is $3K_1$-free. As we have already seen several times, a connected but not 2-connected $3K_1$-free graph has a very special structure. Let $w$ be  an articulation point of $G[Q_2]$ adjacent to $x$ and let the components of $G[Q_2]-\{w\}$ be $J_1$, $J_2$. Then both $J_1$ and $J_2$ induce cliques, $w$ is adjacent to all vertices of one of them, say $J_1$, and to at least one vertex, say $w'$, in $J_2$. Then one of the three cases happens:
\begin{enumerate}
\item $|J_2|=1$,
\item $|J_2|>1$ and $w$ has at least two neighbours in $J_2$,
\item $|J_2|>1$ and $w'$ is the only neighbour of $w$ in $J_2$.
\end{enumerate} 
In the first two cases, $w$ is the only articulation point in $G[Q_2]$, while in the last case, $w'$ is also an articulation point and $G[Q_2]$ has exactly two articulation points ($w$ and $w'$). Thus the neighbours of $x$ in $Q_2$ are $w$ and possibly $w'$ (but only in the case 3. above). 

Since $G$ is 2-connected, the vertex $u$ has a neighbour in $J_1$ and also a neighbour in $J_2$ which is not  an articulation point of $G[Q_2]$. Several cases can occur depending on position of $v$. 
\begin{itemize}
\item 
If $v \in Q_1$, then we  obtain one path starting at vertex $v$, traversing all the vertices of $G[Q_1]$, and ending in $x$. We obtain the second path by concatenating $u$ and a Hamiltonian path of $G[Q_2]$ starting in a neighbour of $u$ which is not an articulation point of $G[Q_2]$.
\item If $v=x$, we obtain the first path by concatenating $v$ and a Hamiltonian path in $G[Q_1]$, and the second path exactly as in the previous case. 
\item 
If $v \in Q_2$ and $v$ is not an articulation point of $G[Q_2]$, then we obtain one path starting at vertex $u$, traversing all the vertices of $G[Q_1]$, and $x$. The second path is a Hamiltonian path of $G[Q_2]$ starting in $v$. 
\item 
If $v \in Q_2$, $v$ is an articulation point of $G[Q_2]$ and $x' \neq v$ is a neighbour of $x$ in $G[Q_2]$, then $G$ actually has a Hamiltonian path starting in vertex $v$. In this case the vertex $x'$ is also an articulation point of $G[Q_2]$, and thus the case 3. above applies, and $\{v,x'\}=\{w,w'\}$. Since the situation is symmetric, we may assume without loss of generality that $v=w$ and $x'=w'$. A Hamiltonian path starting in vertex $v$ is then obtained by starting in $v$, traversing all vertices of $J_1$ to a neighbour of $u$, continuing to $u$, traversing all vertices of $Q_1$, continuing to $x$, then to $x'$ and finishing by traversing the remaining vertices of $J_2$. The existence of a path cover $\mathrm{PC}(v,u)$ then follows immediately.
\item 
If $v \in Q_2$ and $v$ is an articulation point of $G[Q_2]$, it cannot be the only neighbour of $x$ in $G[Q_2]$, since $\{u,v\}$ would violate the condition (c) of this theorem otherwise.
\end{itemize}
This concludes the proof. 
\end{proof}

\subsection{$5K_1$-free graphs}\label{sec:5k1}

In this section we present necessary and sufficient conditions for a connected $5K_1$-free graph to contain a Hamiltonian path. 

\begin{theorem}\label{thm:5k1}
Let $G$ be a connected $5K_1$-free graph. The graph $G$ has a Hamiltonian path if and only if the following conditions are satisfied. 
\begin{enumerate}[label=(\alph*)]
\item For every articulation point $x$, $c(G-\{x\}) \leq 2$.
\item There are no 3 articulation points inducing a triangle in $G$.
\item If $G$ is not $2$-connected, and $x$ is an articulation point of $G$ such that one of the components of $G-\{x\}$, denoted by $Q_1$, is a clique and the other one, denoted by $Q_2$, induces a $4K_1$-free graph, then there exists a vertex $u\in Q_2$ adjacent to $x$ such that $G[Q_2]$ has a Hamiltonian path starting in $u$. 
\item If $G$ is $2$-connected, but not $3$-connected, then for every minimum vertex cut $\{x,y\}$ in $G$,  $c(G-\{x,y\}) \leq 3$. 
\end{enumerate}
\end{theorem}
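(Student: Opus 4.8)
The plan is to prove necessity by exhibiting, for each violated condition, a topological obstruction in the block structure of $G$, and to prove sufficiency by a case analysis on the connectivity of $G$ that mirrors the proofs of Theorems~\ref{thm:4k1} and~\ref{thm:4k1_1con}. For necessity, note first that deleting one vertex from a path leaves at most two subpaths and deleting two vertices leaves at most three: if (a) fails, some articulation point $x$ has $c(G-\{x\})\ge 3$, so no Hamiltonian path can meet all components of $G-\{x\}$; if (d) fails, a minimum $2$-cut $\{x,y\}$ has $c(G-\{x,y\})\ge 4$ (exactly $4$, by $5K_1$-freeness), the same counting argument applies. If (b) fails, the three articulation points $x,y,z$ each carry a private component reachable only through that single vertex, a degree-three branching in the block--cut tree that no path can traverse. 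Finally, if (c) fails, every Hamiltonian path would have to enter the $4K_1$-free component $Q_2$ through $x$ and traverse all of $G[Q_2]$ from a neighbour of $x$, which is exactly what (c) asserts to be impossible.

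For sufficiency I would assume (a)--(d) and distinguish three cases. If $G$ is $3$-connected, then since $G$ is $5K_1$-free we have $\alpha(G)\le 4<3+2$, so Proposition~\ref{prop:chvatal}.1 already yields a Hamiltonian path. If $G$ is connected but not $2$-connected, I would fix an articulation point $x$; by (a), $G-\{x\}$ has exactly two components $Q_1,Q_2$ with $\alpha(Q_1)+\alpha(Q_2)\le 4$. When one of them, say $Q_1$, is a clique, the other is $4K_1$-free, and I would invoke (c) to obtain a vertex $u\in Q_2$ adjacent to $x$ from which $G[Q_2]$ has a Hamiltonian path, then concatenate a Hamiltonian path of $G[Q_1]$ ending in a neighbour of $x$, the vertex $x$, and that path. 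When neither component is a clique, both are $3K_1$-free, and I would show, exactly as in the proof of Theorem~\ref{thm:4k1}, that (a) and (b) force $x$ to have a neighbour in each $Q_i$ that is not an articulation point of $G[Q_i]$ (otherwise the rigid structure of connected, non-$2$-connected $3K_1$-free graphs produces either a third component after deleting a single vertex, contradicting (a), or a triangle of articulation points, contradicting (b)); Theorem~\ref{thm:3K1u} then supplies the two half-paths to concatenate through $x$.

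The remaining and hardest case is $G$ being $2$-connected but not $3$-connected. Here (a)--(c) are vacuous and only (d) is active; moreover, if $\alpha(G)\le 3$ then Proposition~\ref{prop:chvatal}.1 finishes immediately, so the whole difficulty is concentrated in $\alpha(G)=4$. I would fix a minimum cut $\{x,y\}$ and split on $c(G-\{x,y\})$. If some minimum cut has three components, then by $5K_1$-freeness at most one component fails to be a clique and that one is $3K_1$-free, so I would thread a Hamiltonian path through $x$ and $y$ component by component, placing the non-clique component where only one attachment vertex is needed. If every minimum cut has exactly two components $Q_1,Q_2$ (each adjacent to both $x$ and $y$ by $2$-connectivity), I would reduce to routing through the components: it suffices to find a Hamiltonian path, or a two-path cover with prescribed endpoints, inside $G[Q_1]$ and $G[Q_2]$ whose endpoints attach to $x$ and $y$, appealing to Theorem~\ref{thm:3K1_uv} when a component is $3K_1$-free and to Theorems~\ref{thm:4k1}--\ref{thm:4k1pc} when it is $4K_1$-free.

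I expect the main obstacle to be this last situation, and specifically the degenerate configuration in which \emph{every} admissible pair of attachment vertices (one adjacent to $x$, one adjacent to $y$) happens to form a minimum vertex cut of the component, so that Theorem~\ref{thm:3K1_uv} cannot be applied directly. This is the $5K_1$ analogue of Case~2c in the proof of Theorem~\ref{thm:4k1_1con}, and resolving it will require a fine analysis of the very constrained structure of the $3K_1$-free (or $4K_1$-free) component together with the $2$-connectivity and $5K_1$-freeness of $G$, choosing the attachment vertices and the direction of traversal so that all hypotheses of the invoked endpoint and path-cover theorems are met, while the assumption that no minimum cut yields three or more components rules out the genuinely obstructing shapes.
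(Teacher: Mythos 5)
Your outline reproduces the paper's overall strategy: necessity by counting the components left after deleting one or two vertices, sufficiency by a case split on connectivity, Chv\'atal--Erd\H{o}s for the $3$-connected case, condition (c) plus Theorem~\ref{thm:3K1u} for the non-$2$-connected case, and a reduction of the $2$-connected case to prescribed-endpoint Hamiltonian paths and two-path covers inside the components $Q_1,Q_2$ of a $2$-cut. Those easy parts are handled correctly and match the paper. But the proposal is not a proof: in the hardest case ($G$ $2$-connected, every minimum $2$-cut leaving exactly two components) you state the reduction and then explicitly defer the analysis of the degenerate configurations in which Theorem~\ref{thm:3K1_uv} (respectively Theorem~\ref{thm:4k1pc}) cannot be applied because every admissible pair of attachment vertices violates one of its conditions. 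That deferred analysis is the actual mathematical content of the theorem, and you offer no argument for it beyond the expectation that (d) and $2$-connectivity ``rule out the genuinely obstructing shapes.''

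Concretely, what is missing is the counterpart of the paper's Claims~7.1--7.3 (both components of $G-\{x,y\}$ are $3K_1$-free) and of its Cases A--E (one component a clique, the other only $4K_1$-free). For example: when all neighbours of $x$ in $Q_1$ are articulation points of $G[Q_1]$, condition (a) of Theorem~\ref{thm:3K1_uv} fails for \emph{every} choice of attachment vertices, and the paper instead builds the Hamiltonian path by hand from the rigid structure of non-$2$-connected $3K_1$-free graphs, using a $5K_1$-freeness contradiction to show $G[Q_2]$ must then admit a Hamiltonian path starting at a neighbour of $x$; when neither neighbourhood lies inside $Art(G[Q_i])$, one still has to verify (via the paper's Claim~7.2) that attachment vertices can be chosen in different components of $G[Q_i]-Art(G[Q_i])$ so that condition (b) of Theorem~\ref{thm:3K1_uv} holds; and when $Q_2$ is $4K_1$-free, each of the four possible violations of Theorem~\ref{thm:4k1pc} must be analyzed separately with explicit path constructions (including switching to a different attachment pair, and re-entry into earlier cases). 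None of this is routine, and nothing in the proposal indicates how these configurations are resolved --- you acknowledge as much yourself. As it stands, the central case is an unproven claim, so the proposal is a correct plan rather than a proof.
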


\begin{figure}
\centering
\includegraphics[width=\textwidth]{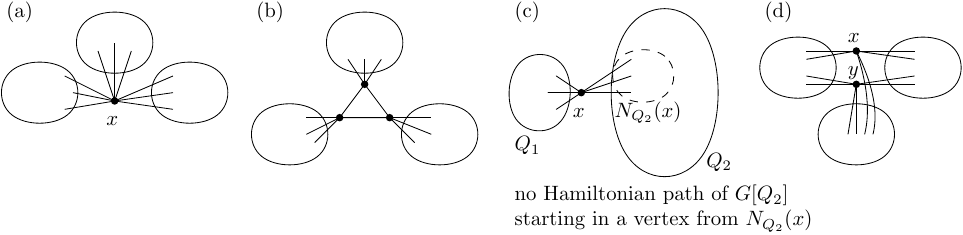}
\caption{An illustration of violations of the conditions (a),(b),(c) and (d) in Theorem~\ref{thm:5k1}.}
\label{fig:5k1}
\end{figure}

\begin{proof} We will use notation $N_Q(x)=\{y:y\in Q \wedge xy\in E(G)\}$ for a vertex $x\in V(G)$ and a set $Q$ of vertices of $G$.

It is easy to observe that if any of the conditions is violated, then $G$ does not have a Hamiltonian path. See Figure~\ref{fig:5k1} for an illustration of violations of these conditions. Suppose that all of the conditions (a)--(d) are satisfied. We will show that then $G$ has a Hamiltonian path. 
  
\textbf{Case 1.} \textit{Suppose that $G$ is connected, but not $2$-connected.} 

Let $x$ be an articulation point of $G$. By condition (a), $G-\{x\}$ has exactly two components of connectivity. First suppose that one of the components of $G-\{x\}$ (say $Q_1$) is a clique  and the other one (say $Q_2$) is $4K_1$-free. Clearly, $G$ has a Hamiltonian path if and only if there exists  $u\in N_{Q_2}(x)$  such that there exists a Hamiltonian path of $G[Q_2]$ starting in $u$, and this is guaranteed by condition (c) of this theorem.

If both $Q_1$, $Q_2$ induce $3K_1$-free graphs, then $G$ has a Hamiltonian path if and only if $x$ has a neighbour  $ x_1 \in Q_1$ and a neighbour $x_2 \in Q_2$ such that $x_1$ is not an articulation point in $G[Q_1]$ and $x_2$ is not an articulation point in $G[Q_2]$ (because, by Theorem~\ref{thm:3K1u}, in these, and only in these cases $G[Q_i]$ has a Hamiltonian path, for $i=1,2$). Recall that a connected $3K_1$-free graph has at most two articulation points, and if it has two of them, then they are adjacent. If $x$ is adjacent only to one articulation point in $G[Q_1]$ (or in $G[Q_2]$) and no other vertices of this component, then condition (a) of this theorem is violated. If $x$ is adjacent to two articulation points in $G[Q_1]$ (or in $G[Q_2]$) and no other vertices of this component, then condition (b) is violated.

\textbf{Case 2.} \textit{ Suppose that $G$ is $2$-connected, but not $3$-connected.}

Condition (d) of this theorem implies that for every vertex cut $\{x,y\}$ of size 2 in $G$, $c(G-\{x,y\}) \leq 3$.
If there is a vertex cut $\{x,y\}$ of size 2 in $G$ such that $c(G-\{x,y\}) = 3$, then two components of $G-\{x,y\}$, say $Q_1,Q_2$, are cliques  and the third component, say $Q_3$, induces a $3K_1$-free graph. In this case, $G$ has a Hamiltonian path if and only if there is a vertex in ${Q_3}$ which is not an articulation point of $G[Q_3]$ and which is adjacent to $x$  or $y$. Such a vertex always exists since $G$ is $2$-connected.

Suppose from now on that for every vertex cut $\{x,y\}$ of size 2 in $G$, $c(G-\{x,y\})= 2$. Fix a vertex cut $\{x,y\}$ in $G$ and let $Q_1,Q_2$ be the two components of $G-\{x,y\}$.

\textbf{Case 2a.} \textit{ Both $Q_1,Q_2$ induce $3K_1$-free graphs.}  

Recall that $3K_1$-free graphs have path covers of size two starting at any pair of vertices by Theorem \ref{thm:3K1pathcover}. 

If at least one of $G[Q_1]$ and $G[Q_2]$ has a Hamiltonian path  starting in a neighbour of $x$ and ending in a neighbour of $y$, then $G$ has a Hamiltonian path. If this is not the case, then for each $i=1,2$ and for every choice of $x'\in N_{Q_i}(x)$, $y'\in N_{Q_i}(y)$, $x'\neq y'$, at least one  of the conditions (a)--(c) of Theorem \ref{thm:3K1_uv} is violated. We will show that even if this is the case, $G$ contains a Hamiltonian path anyway.

\medskip\noindent 
{\em Claim 7.1} \ If for some $i=1,2$, all neighbours of at least one of the vertices $x,y$ are articulation points of $G[Q_i]$, then $G$ has a Hamiltonian path. 
(Note that in this case all choices of $(x',y')$ such that $x'\in N_{Q_i}(x)$, $y'\in N_{Q_i}(y)$ violate condition (a) of Theorem~\ref{thm:3K1_uv}.)

\medskip\noindent
{\em Proof of the claim.} Suppose without loss of generality that $N_{Q_1}(x)\subseteq Art(G[Q_1])$. Since $x$ has at least one neighbour in $Q_1$, this means that $G[Q_1]$ is not 2-connected, and as we have already seen several times, $|Art(G[Q_1])|\le 2$.  Let $u$ be an articulation point of $G[Q_1]$ adjacent to $x$. Denote the connected components of $G[Q_1 - \{u\}]$ by $J_1, J_2$. Since $G[Q_1]$ is $3K_1$-free, $J_1$ and $J_2$ are cliques, $u$ is adjacent to all vertices of one component, say $J_1$, and $u$ has at least one neighbour $v \in J_2$. If $v$ is the only neighbour of $u$ in $J_2$ and $|J_2|>1$, then $v$ is also an articulation point of $G[Q_1]$ and there are no other articulation points. Thus, $N_{Q_1}(x) \subseteq \{u,v\}$ if $v$ is an articulation point of $G[Q_1]$, and $N_{Q_1}(x) = \{u\}$ otherwise. Since $G$ is 2-connected, $y$ has neighbours both in $J_1$ and $J_2$. 

It follows that $G[Q_1 \cup \{x,y\}]$ has a Hamiltonian path starting in a vertex of $J_2$, going through all vertices of $G[J_2]$ to  a neighbour of $y$, proceeding to $y$, then traversing  all vertices of $G[J_1]$, proceeding to $u$ and ending in $x$. If $G[Q_2]$ has a Hamiltonian path starting at a neighbour of $x$, then we are done. 

Suppose for a contradiction that $G[Q_2]$ does not have a Hamiltonian path starting at a neighbour of $x$. By Theorem~\ref{thm:3K1u}, we have that $N_{Q_2}(x)\subseteq Art(G[Q_2])$. Then $x$ together with one vertex from each component of $G - (\{x,y\} \cup N_{Q_1}(x) \cup N_{Q_2}(x))$ create a copy of $5K_1$ in $G$, a contradiction. \qed

\medskip\noindent
{\em Claim 7.2} \ Suppose that for some $i=1,2$, $G[Q_i]$ is not 2-connected and that neither $N_{Q_i}(x)$ nor $N_{Q_i}(y)$ is a subset of $Art(G[Q_i])$. Then $x$ and $y$ have neighbours in different components of $G[Q_i]-Art(G[Q_i])$.

\medskip\noindent
{\em Proof of the claim.} We know that every vertex of $Q_i$ is adjacent to at most one articulation point of $G[Q_i]-Art(G[Q_i])$, this $G[Q_i]-Art(G[Q_i])$ has two components, and the vertices of each component are adjacent to the same articulation point (if they are adjacent to any articulation point at all). If both $N_{Q_i}(x)$ and $N_{Q_i}(y)$ were disjoint with the same component of $G[Q_i]-Art(G[Q_i])$, then the articulation point adjacent to this component would be an articulation point of entire $G$, contradicting the 2-connectivity of $G$. \qed

\medskip\noindent
{\em Claim 7.3} \ Suppose that both $G[Q_1]$ and $G[Q_2]$ are 2-connected. Then $G$ has a Hamiltonian path.

\medskip\noindent
{\em Proof of the claim.} Consider $Q_1$. Since $G[Q_1]$ is 2-connected, conditions (a) and (b) of Theorem \ref{thm:3K1_uv} are satisfied for any choice of $a \in N_{Q_1}(x), b \in N_{Q_1}(y) $. If, for some choice of $a\neq b$, condition (c) is also satisfied, we construct a Hamiltonian path in $G$ as it was described in the beginning of Case~2a. Hence suppose that for any pair of distinct vertices $a \in N_{Q_1}(x), b \in N_{Q_1}(y) $, $\{a,b\}$ is a minimum vertex cut in $G[Q_2]$.
If $|N_{Q_1}(x) \cup N_{Q_1}(y)| = 2$, then $N_{Q_1}(x) \cup N_{Q_1}(y)$ is a vertex cut of size 2 in $G[Q_1]$, and then it is also a vertex cut of size 2 in $G$ and after removing it, we get at least 3 components. This would violate condition $(d)$ of this theorem. 

Suppose that $|N_{Q_1}(x) \cup N_{Q_1}(y)| > 2$ and suppose that $x$ has at least 2 neighbours in $G[Q_1]$. It follows from observations done in Case~2c of Theorem~\ref{thm:4k1_1con} and Figure~\ref{fig:Q_2} that $G[Q_1 \cup \{x\}]$ has a Hamiltonian cycle. Thus, if $G[Q_2]$ has a Hamiltonian path starting in a neighbour of $y$, then we are done. If this not the case, by Theorem~\ref{thm:3K1u}, we have that $y$ is adjacent only to articulation points of $G[Q_2]$ which contradicts our assumption that $G[Q_2]$ is also 2-connected. \qed

\medskip
Let us now summarize the Case~2a. Suppose first that at least one of the components $Q_1, Q_2$ induces a graph which is not 2-connected, let it be $Q_1$. If all neighbours of  $x$ or $y$ in $Q_1$ are articulation points of this components, then $G$ contains a Hamiltonian path by Claim~7.1. If neither $N_{Q_1}(x)$ nor $N_{Q_1}(y)$ is a subset of $Art(G[Q_1])$, then by Claim~7.2 we can  choose a neighbour $a$ of $x$ and a neighbour $b$ of $y$, both in $Q_1$, in such a way that they belong to different components of $G[Q_1]-Art(G[Q_1])$. It follows that for any articulation point $u\in Art(G[Q_1])$, $a$ and $b$ belong to different components of $G[Q_1]-\{u\}$ and hence condition (b) of Theorem~\ref{thm:3K1_uv} is satisfied. Condition (a) is satisfied because $\{a,b\}\cap Art(G[Q_1]) = \emptyset$ and condition (c) is satisfied because $G[Q_1]$ is not 2-connected. Thus a Hamiltonian path from $a$ to $b$ in $G[Q_1]$ exists, and it can be extended to a Hamiltonian path in $G$ by a path cover $\mathrm{PC}(a',b')$ in $G[Q_2]$, with $a'\in N_{Q_2}(x)$, $b'\in N_{Q_2}(b')$, $a'\neq b'$. Finally, if both $G[Q_1]$ and $G[Q_2]$ are 2-connected, $G$ has a Hamiltonian path by Claim~7.3.
 
\textbf{Case 2b.} \textit{ One of the components, say $Q_1$, is a clique and the other one, say $Q_2$, induces a $4K_1$-free graph.} 

Observe that if there exits a path cover of size 2 in $G[Q_2]$ with one path starting at a vertex in $N_{Q_2}(x)$ and the other one 
starting in a vertex in $ N_{Q_2}(y)$, then $G$ has a Hamiltonian path. Such a path cover does not exist if and only if
at least one condition of Theorem \ref{thm:4k1pc} is violated for $G[Q_2]$. Several cases can occur. 

\textit{Case A. There is an articulation point $z$ of $G[Q_2]$, such that $G[Q_2] -\{z\}$ has at least 3 components.}

Let $z$ be such an articulation point of $G[Q_2]$, i.e., $G[Q_2] -\{z\}$ has at least 3 components, let us denote them $J_1, J_2, J_3$. Since $G[Q_2]$ is $4K1$-free, all of $G[J_1], G[J_2], G[J_3]$ induce cliques. First suppose that (a) of Theorem \ref{thm:4k1pc} is violated for any pair of distinct vertices, one from $N_{Q_2}(x)$ and second one from $N_{Q_2}(y)$. If $N_{Q_2}(x)$ (or $N_{Q_2}(y)$)  contains only $z$, then $c(G-\{y,z\}) \geq 3$ (or $c(G-\{x,z\}) \geq 3$) which would violate condition (d) of this theorem. Also,  $N_{Q_2}(x) \cup N_{Q_2}(y)$ cannot be a subset of $(G[J_i] \cup \{z\})$ for any $i=1,2,3$, since $G$ is 2-connected. Thus, this is not possible. 

Assume that $x$ and $y$ have neighbours in different components of $G[Q_2] -\{z\}$, say $x$ has a neighbour $u$ in $J_1$ and $y$ has a neighbour $v$ in $J_2$ (thus, (a) of Theorem \ref{thm:4k1pc} is satisfied for $u,v$). It follows from the structure of $G[Q_2]$ that 
$G[Q_2]$ does not have 3 articulation points inducing a triangle, and hence condition (b) of Theorem \ref{thm:4k1pc} must be satisfied for $u,v$. 

Suppose that (c) of Theorem \ref{thm:4k1pc} is violated for $u,v$. This means that $\{u,v\}$ is a vertex cut in $G[Q_2]$ such that $c(G[Q_2] - \{u,v\} ) \geq 3$. This is possible only if $u$ is the only neighbour of $z$ in $J_1$ and $v$ is the only neighbour of $z$ in $J_2$ and $|J_1|, |J_2| >1$. Since $G[Q_2]$ is $4K_1$-free, $z$ is adjacent to all vertices in $J_3$. Since $G$ is 2-connected, then $x$ or $y$ must have a neighbour in $J_3$, say $x$ has a neighbour $u'$ in $J_3$. Also note that $u'$ is not an articulation point of $G[Q_2]$. 

Now consider the pair $u', v$. We can obtain a path cover $\mathrm{PC}(u',v)$ by taking one path starting at $u'$, then traversing all vertices of $J_3$, $z$, $u$ and all vertices of $J_1$. The second path starts in $v$ and traverses all vertices of $J_2$.

\textit{Case B. There exist three articulation points $a,b,c$ inducing a triangle in $G[Q_2]$.}
Let $a,b,c$ be three articulation points inducing a triangle in $G[Q_2]$. Denote the components of $G[Q_2]-\{a,b,c\}$ by $J_a, J_b, J_c$ correspondingly. Since, $G[Q_2]$ is $4K_1$-free, $a$ is adjacent to all vertices in $J_a$, $b$ is adjacent to all vertices in $J_b$ and $c$ is adjacent to all vertices in $J_c$.

First suppose that (b) of Theorem~\ref{thm:4k1pc} is violated for any pair of distinct vertices, one from $N_{Q_2}(x)$ and second one from $N_{Q_2}(y)$.
Since $G$ is 2-connected, then $x$ or $y$ has a neighbour in $G[Q_2]$ which is not an articulation point and $N_{Q_2}(x) \cup N_{Q_2}(y)$ cannot be a subset of vertices $\{a,b,c\}$ together with vertices from one component of $G[Q_2]-\{a,b,c\}$. Thus, this is not possible. 

It follows that at least one of $x, y$ has a neighbour which is not articulation point of $G[Q_2]$, say $x$ has a neighbour $u$ in $J_a$, and $y$ has a neighbour in 
$G[Q_2 - (J_a \cup \{a \})]$, say $y$ is adjacent to $v$ in $J_b \cup \{b\}$. We can obtain one path starting in $x$, going through $u$, all vertices of $J_a$, $a$, $c$ and all vertices of $J_c$. The second path starts at $y$, goes through $b$ and all vertices of $J_b$. 

\textit{Case C. For a pair of distinct vertices $u \in N_{Q_2}(x)$ and $v \in N_{Q_2}(y)$, $\{u,v\}$ is a vertex cut (not necessarily minimal) in $G[Q_2]$ such that $c(G[Q_2 - \{u,v\}]) \geq 3$. }

First suppose that $|N_{Q_2}(x) \cup N_{Q_2}(y)| = 2$. We have that for a pair of distinct vertices $u \in N_{Q_2}(x)$ and $v \in N_{Q_2}(y)$, $\{u,v\}$ is a vertex cut (not necessarily minimal) in $G[Q_2]$ such that $c(G[Q_2 - \{u,v\}]) \geq 3$. Then $\{u,v\}$ is also a vertex cut of size 2 in $G$ and $c(G - \{u,v\}) \geq 4$ which would violate condition (d) of this theorem. 

Now suppose that $|N_{Q_2}(x) \cup N_{Q_2}(y)| > 2$. Let $u,v$ be fixed pair of distinct vertices, $u \in N_{Q_2}(x)$ and $v \in N_{Q_2}(y)$ such that $\{u,v\}$ is a vertex cut (not necessarily minimal) in $G[Q_2]$ and $c(G[Q_2 - \{u,v\}]) \geq 3$. 
We will distinguish two cases. 

\textit{Case C.1. The pair $\{u,v\}$ is a minimal vertex cut in $G[Q_2]$.} Since $G[Q_2]$ is $4K_1$-free graph, $G[Q_2 - \{u,v\}]$ has exactly 3 components, denote them $J_1, J_2, J_3$ and they are cliques. Each of $u,v$ has at least one neighbour in each component. 

Suppose that $G[Q_2]$ is not 2-connected. There is an articulation point $z$ of $G[Q_2]$. From the minimality of the vertex cut $\{u,v\}$ we have that $z \neq u,v$. Since $J_1,J_2,J_3$ are cliques, the only possibility is that $z$ is the only neighbour of $u$ and $v$ in one of the cliques, say $J_1$, and the size of $J_1$ is at least 2. Also note that $z$ is adjacent to all  other vertices of $J_1$. The graph $G$ is 2-connected, thus $x$ or $y$, say $x$, has a neighbour $x'$ in $G[J_1 - \{z\}]$. We can obtain one path starting at vertex $x'$, traversing through all vertices of $J_1$, $z$, $u$ and all vertices of $J_2$. Second path goes through $v$ and all vertices of $J_3$.

Suppose that $G[Q_2]$ is 2-connected. Since $|N_{Q_2}(x) \cup N_{Q_2}(y)| > 2$, at least one of $x, y$ has a neighbour in $G[Q_2]$ other than $u$ and $v$, say $x$ has a neighbour $u'$ in $J_1$. The pair $\{u,v\}$ is a minimal vertex cut in $G[Q_2]$ and $G[Q_2]$ is 2-connected, thus, $u$ and $v$ have together two distinct neighbours in each component.  We can obtain one path starting at vertex $u'$ and traversing all vertices of $J_1$. Second path starts at $v$ and goes through all vertices of $J_2$, $u$ and all vertices of $J_3$.

\textit{Case C.2. The pair $\{u,v\}$ is a vertex cut in $G[Q_2]$ such that $c(G[Q_2 - \{u,v\}]) \geq 3$ and it is not minimal.} Thus, $G[Q_2]$ is not 2-connected.

If there is an articulation point $z$ of $G[Q_2]$ such that $c(G[Q_2 - \{z\}]) = 3$ (it is possible $z=u$ or $z=v$), then we are in Case $A$. 

Thus, suppose that for each articulation point of $G[Q_2]$, $G[Q_2]$ has 2 components after removing it. Since $\{u,v\}$ is a not minimal vertex cut in $G[Q_2]$, at least one of $u, v$ is an articulation point of $G[Q_2]$, say $u$. The graph $G[Q_2 - \{u\}]$ has 2 components, denote the one containing $v$ by $J_1$ and the other one by $J_2$. The vertex $v$ must be an articulation point in $G[J_1]$ and thus, $G[J_2]$ is a clique. Denote the components of $G[J_1 - \{v\}]$ by $R_1, R_2$. Let us summarise the situation. The graph $G[Q_2 - \{u,v\}]$ has 3 components $J_2, R_1, R_2$, all of them are cliques, $u$ is adjacent to $J_2$ and $J_1 = R_1 \cup \{v\} \cup R_2$. The vertex $v$ is adjacent to $R_1$ and $R_2$. 

If $v$ is the only neighbour of $u$ in $G[J_1]$, then $c(G[Q_2 - \{v\}]) = 3$ and we are again in Case A. Thus, we can assume without loss of generality that $u$ has a neighbour $u'$ in $R_1$. Since $G$ is 2-connected, there exists a vertex $w$ in $G[J_2]$ which is not the only neighbour (if $|J_2|>1$) of $u$ in $G[J_2]$ such that $w$ is adjacent to $x$ or $y$.

If $w$ is a neighbour of $x$, then we can obtain one path starting at $w$, then going through all vertices of $J_2$, $u$ and all vertices of $R_1$. The second path starts at $v$ and goes through all vertices of $R_2$.

Suppose that $w$ is a neighbour of $y$. If all neighbours of $u$ in $J_1$ are articulation points of $G[J_1]$, then we are in Case A or Case B. Otherwise, we can obtain one path going through $w$ and all vertices of $J_2$. Second path goes through $u$ and all vertices of $J_1$ (existence of such path is guaranteed by Theorem~\ref{thm:3K1u}).

\textit{Case D. There is a pair of distinct vertices $u \in N_{Q_2}(x)$ and $v \in N_{Q_2}(y)$, such that condition (d) of Theorem \ref{thm:4k1pc} is violated for $u,v$}.

Thus, we have the following situation. 
There is an articulation point $w$ of $G[Q_2]$ such that $c(G[Q_2]-\{w\}) = 2$, let $J_1$, $J_2$ be the components of $G-\{w\}$. Suppose that $G[J_1]$ contains an articulation point $z$ and denote the component of $G[J_1-\{z\}]$ containing $u$ by $R_1$ and the other component by $R_2$. Observe that $R_1, R_2, J_2$ are cliques.

Since condition (d) of Theorem \ref{thm:4k1pc} is violated for $u,v$,  
we have that $v \in R_1$, $|R_1| > 2$, $w$ is adjacent only to vertices from $R_1 \cup \{z\}$ and there is no vertex $s \neq u,v$ adjacent to $w$ or $z$. Since $G$ is 2-connected, there exists a vertex $w'$ in $J_2$ such that $w'$ is adjacent to $x$ or $y$ and $w'$ is not an articulation point of $G[Q_2]$, i.e. either $|J_2|=1$ or $w$ has a neighbour in $J_2$ other than $w'$. 

Suppose that $w'$ is adjacent to $x$. If $v$ is not an articulation point of $J_1$, then we can take one path going through $w'$, all vertices of $J_2$ and $w$. As the second path we can take a Hamiltonian path of $G[J_1]$ starting at $v$ (it exists by Theorem~\ref{thm:3K1u}). 
If $v$ is an articulation point of $G[J_1]$ and it is the only neighbour of $w$ in $J_1$, then we are in Case A. Thus, assume that $w$ has a neighbour other than $v$, i.e. $w$ is adjacent to $u$. In this case, we can obtain one path going through $ w', w, u$ and all vertices of $R_1$ and second path going through $ v, z$ and all vertices of $R_2$. 

Analogously, if $w'$ is adjacent to $y$ (again, we will have two case depending on whether $u$ is an articulation point of $G[J_1]$ or not).

\textit{Case E. None of the cases A.-D. occurred.} 

In this case, there is no possibility for violation of conditions of Theorem~\ref{fig:4k1pathcover}. Thus, there exists a sought path cover.

\textbf{Case 3.} \textit{ $G$ is at least 3-connected.} 

In this case, $G$ has a Hamiltonian path by Proposition~\ref{prop:chvatal}.1. 
\end{proof}

\section{Conclusion}\label{sec:Conclusion}

We characterized the structure of graphs having a Hamiltonian path and path cover of size two with prescribed endpoints for the class of $3K_1$-free and $4K_1$-free graphs. The characterization was then applied to characterize $5K_1$-free graphs with a Hamiltonian path. All the characterizations give explicit algorithms for checking the existence of a Hamiltonian path, cycle, and connectedness. We leave structural description of $kK_1$-free graphs having Hamiltonian path, cycle or being Hamiltonian-connected for higher $k$ as a future research.

\bibliographystyle{splncs04}
\bibliography{references}

\begin{thebibliography}{10}
\providecommand{\url}[1]{\texttt{#1}}
\providecommand{\urlprefix}{URL }
\providecommand{\doi}[1]{https://doi.org/#1}

\bibitem{bertossi1986hamiltonian}
Bertossi, A.A., Bonuccelli, M.A.: Hamiltonian circuits in interval graph
  generalizations. Information Processing Letters  \textbf{23}(4),  195--200
  (1986)

\bibitem{bondy2003short}
Bondy, J.A.: Short proofs of classical theorems. Journal of Graph Theory
  \textbf{44}(3),  159--165 (2003)

\bibitem{Spirkl2019}
Chudnovsky, M., Spirkl, S., Zhong, M.: Four-coloring ${P}_6$-free graphs. In:
  Proceedings of the Thirtieth Annual ACM-SIAM Symposium on Discrete
  Algorithms. pp. 1239--1256. SIAM (2019)

\bibitem{chvatal1972hamilton}
Chv{\'a}tal, V.: On {Hamilton}'s ideals. Journal of Combinatorial Theory,
  Series B  \textbf{12}(2),  163--168 (1972)

\bibitem{chvatal1972note}
Chv{\'a}tal, V., Erd{\"o}s, P.: A note on {Hamiltonian} circuits. Discret.
  Math.  \textbf{2}(2),  111--113 (1972)

\bibitem{damaschke1989hamiltonian}
Damaschke, P.: The {Hamiltonian} circuit problem for circle graphs is
  {NP}-complete. Information Processing Letters  \textbf{32}(1), ~1--2 (1989)

\bibitem{damaschke1993paths}
Damaschke, P.: Paths in interval graphs and circular-arc graphs. Discrete
  Mathematics  \textbf{112}(1-3),  49--64 (1993)

\bibitem{damaschke1991finding}
Damaschke, P., Deogun, J.S., Kratsch, D., Steiner, G.: Finding {Hamiltonian}
  paths in cocomparability graphs using the bump number algorithm. Order
  \textbf{8}(4),  383--391 (1991)

\bibitem{denley2001generalization}
Denley, T., Wu, H.: A generalization of a theorem of {Dirac}. Journal of
  combinatorial theory. Series B  \textbf{82}(2),  322--326 (2001)

\bibitem{dirac1952some}
Dirac, G.A.: Some theorems on abstract graphs. Proceedings of the London
  Mathematical Society  \textbf{3}(1),  69--81 (1952)

\bibitem{Duf1981}
Duffus, D., Gould, R., Jacobson, M.: Forbidden subgraphs and the {Hamiltonian}
  theme. In: I4th Int. Conf. on the Theory and Applications of Graphs,
  Kalamazoo, 1980. pp. 297--316. Wiley (1981)

\bibitem{garey1976planar}
Garey, M.R., Johnson, D.S., Tarjan, R.E.: The planar {Hamiltonian} circuit
  problem is {NP}-complete. SIAM Journal on Computing  \textbf{5}(4),  704--714
  (1976)

\bibitem{golumbic2004algorithmic}
Golumbic, M.C.: Algorithmic graph theory and perfect graphs. Elsevier (2004)

\bibitem{gould2003advances}
Gould, R.J.: Advances on the {Hamiltonian} problem -- a survey. Graphs and
  Combinatorics  \textbf{19}(1),  7--52 (2003)

\bibitem{gould2014recent}
Gould, R.J.: Recent advances on the {Hamiltonian} problem: Survey {III}. Graphs
  and Combinatorics  \textbf{30}(1),  1--46 (2014)

\bibitem{jedlickova2023hamiltonian}
Jedličková, N., Kratochvíl, J.: Hamiltonian path and {Hamiltonian} cycle are
  solvable in polynomial time in graphs of bounded independence number (2023)

\bibitem{Karp1972}
Karp, R.M.: Reducibility among Combinatorial Problems, pp. 85--103. Springer
  US, Boston, MA (1972)

\bibitem{keil1985finding}
Keil, J.M.: Finding {Hamiltonian} circuits in interval graphs. Information
  Processing Letters  \textbf{20}(4),  201--206 (1985)

\bibitem{mu96}
M{\"u}ller, H.: Hamiltonian circuits in chordal bipartite graphs. Discrete
  Mathematics  \textbf{156}(1-3),  291--298 (1996)

\bibitem{Mutze2016}
M{\"u}tze, T.: Proof of the middle levels conjecture. Proceedings London Math.
  Society  \textbf{112}(4),  677--713 (2016)

\bibitem{Mutze2018}
M{\"{u}}tze, T., Nummenpalo, J., Walczak, B.: Sparse kneser graphs are
  hamiltonian. In: Diakonikolas, I., Kempe, D., Henzinger, M. (eds.)
  Proceedings of the 50th Annual {ACM} {SIGACT} Symposium on Theory of
  Computing, {STOC} 2018, Los Angeles, CA, USA, June 25-29, 2018. pp. 912--919.
  {ACM} (2018). \doi{10.1145/3188745.3188834}

\bibitem{ore1960note}
Ore, O.: A note on {Hamiltonian} circuits. American Mathematical Monthly
  \textbf{67}, ~55 (1960)

\end{thebibliography}
\end{document}